\providecommand\@dotsep{5}
\def\listtodoname{List of Todos}
\def\listoftodos{\@starttoc{tdo}\listtodoname}
\numberwithin{equation}{section}
\newcommand{\e}{\varepsilon}
\newcommand{\R}{\mathbb{R}}
\newcommand{\N}{\mathbb{N}}
\newcommand{\C}{\mathbb{C}}
\DeclareMathOperator{\dive}{div}
\DeclareMathOperator{\supp}{supp}
\DeclareMathOperator{\B}{\mathcal{B}}
\newcommand{\h}{H^{s}_{\e}}
\newcommand{\2}{2^{*}_{s}}
\newtheorem{theorem}{Theorem}[section]
\newtheorem{lemma}[theorem]{Lemma}
\theoremstyle{definition}
\theoremstyle{remark}
\newtheorem{remark}[theorem]{Remark}
\numberwithin{equation}{section}
\begin{document}
\title[Fractional Choquard equations with magnetic fields]{Concentration phenomena for a fractional Choquard equation with magnetic field}

\author[V. Ambrosio]{Vincenzo Ambrosio}
\address{Vincenzo Ambrosio\hfill\break\indent 
Department of Mathematics  \hfill\break\indent
EPFL SB CAMA \hfill\break\indent
Station 8 CH-1015 Lausanne, Switzerland}
\email{vincenzo.ambrosio2@unina.it}

\subjclass{Primary 35A15, 35R11; Secondary 45G05}

\date{}


\keywords{Fractional Choquard equation, fractional magnetic Laplacian, penalization method.}

\begin{abstract}
We consider the following  nonlinear fractional Choquard equation
$$
\e^{2s}(-\Delta)^{s}_{A/\e} u + V(x)u = \e^{\mu-N}\left(\frac{1}{|x|^{\mu}}*F(|u|^{2})\right)f(|u|^{2})u \mbox{ in } \R^{N},
$$
where $\varepsilon>0$ is a parameter, $s\in (0, 1)$, $0<\mu<2s$, $N\geq 3$, $(-\Delta)^{s}_{A}$ is the fractional magnetic Laplacian, $A:\R^{N}\rightarrow \R^{N}$ is a smooth magnetic potential, $V:\R^{N}\rightarrow \R$ is a positive potential with a local minimum and $f$ is a continuous nonlinearity with subcritical growth. 
By using variational methods we prove the existence and concentration of nontrivial solutions for $\e>0$ small enough.
\end{abstract}

\maketitle

\section{Introduction}

\noindent
In this paper we investigate the existence and concentration of nontrivial solutions for the following nonlinear fractional Choquard equation
\begin{equation}\label{P}
\e^{2s}(-\Delta)^{s}_{A/\e} u + V(x)u = \e^{\mu-N}\left(\frac{1}{|x|^{\mu}}*F(|u|^{2})\right)f(|u|^{2})u \,\mbox{ in } \,\R^{N},
\end{equation}
where $\e>0$ is a parameter, $s\in (0,1)$, $N\geq 3$, $0<\mu<2s$, $V:\R^{N}\rightarrow \R$ is a continuous potential and 
$A:\R^{N}\rightarrow \R^{N}$ is a $C^{0, \alpha}$ magnetic potential, with $\alpha\in (0,1]$.

The nonlocal operator $(-\Delta)^{s}_{A}$ is the fractional magnetic Laplacian which may be defined for any $u:\R^{N}\rightarrow \C$ smooth enough by setting
$$
(-\Delta)^{s}_{A}u(x)= c_{N,s} P.V. \int_{\R^{N}} \frac{u(x)-u(y)e^{\imath A(\frac{x+y}{2})\cdot (x-y)}}{|x-y|^{N+2s}} \,dy  \quad (x\in \R^{N}),
$$
where  $c_{N, s}$ is a normalizing constant. This operator has been introduced in \cite{DS, I} with motivations falling into the framework of the general theory of L\'evy processes.
As showed in \cite{SV}, when $s\rightarrow 1$, the operator $(-\Delta)^{s}_{A}$ reduces to the magnetic Laplacian (see \cite{LaL, LL}) defined as
$$
\left(\frac{1}{\imath}\nabla-A\right)^{2}\!u= -\Delta u -\frac{2}{\imath} A(x) \cdot \nabla u + |A(x)|^{2} u -\frac{1}{\imath} u \dive(A(x)),
$$ 
which has been widely investigated by many authors: see \cite{AF, AFF, AS, Cingolani, CS, EL, K}.

Recently, many papers dealt with different fractional problems involving the operator $(-\Delta)^{s}_{A}$. 
d'Avenia and Squassina \cite{DS} studied the existence of ground states solutions for some fractional magnetic problems via minimization arguments.
Pinamonti et al. \cite{PSV1, PSV2} obtained a magnetic counterpart of the Bourgain-Brezis-Mironescu formula and the MazŐya-Shaposhnikova formula respectively; see also \cite{NPSV} for related results.
Zhang et al. \cite{ZSZ} proved a multiplicity result for a fractional magnetic Schr\"odinger equation with critical growth. 
In \cite{MPSZ} Mingqi et al. studied existence and multiplicity of solutions for a subcritical fractional Schr\"{o}dinger-Kirchhoff equation involving an external magnetic potential. 
Fiscella et al. \cite{FPV}  considered a fractional magnetic problem in a bounded domain proving the existence of at  least two nontrivial weak solutions under suitable assumptions on the nonlinear term. 
In \cite{AD} the author and d'Avenia used variational methods and Ljusternick-Schnirelmann theory to prove existence and multiplicity of  nontrivial solutions for a fractional Schr\"{o}dinger equation with subcritical nonlinearities. \\
We note that when $A=0$, the operator $(-\Delta)^{s}_{A}$ becomes the celebrated fractional Laplacian $(-\Delta)^{s}$ which arises in the study of several physical phenomena like phase transitions, crystal dislocations, quasi-geostrophic flows, flame propagations  and so on. Due to the extensive literature on this topic, we refer the interested reader to \cite{DPMV, DPV, MBRS} and the references therein.\\
In absence of the magnetic field, equation \eqref{P} is a fractional  Choquard equation of the type
\begin{equation}\label{FChE}
(-\Delta)^{s} u + V(x)u = \left(\frac{1}{|x|^{\mu}}*F(u)\right)f(u) \,\mbox{ in } \,\R^{N}.
\end{equation}
d'Avenia et al. \cite{DSS} studied the existence, regularity and asymptotic behavior of solutions to \eqref{FChE} when $f(u)=u^{p}$ and $V(x)\equiv const$. If $V(x)=1$ and $f$ satisfies Berestycki-Lions type assumptions, the existence of ground state solutions for a fractional Choquard equation has been established in \cite{SGY}.
The analyticity and radial symmetry of positive ground state for a critical boson star equation has been considered by Frank and Lenzmann in \cite{FL}.  
Recently, the author in \cite{Apota} studied the multiplicity and concentration of positive solutions for a fractional Choquard equation under local conditions on the potential $V(x)$. \\
When $s=1$, equation \eqref{FChE} reduces to  the generalized  Choquard equation:
\begin{equation}\label{GCE}
-\Delta u + V(x) u = \left(\frac{1}{|x|^{\mu}}*F(u)\right)f(u) \,\mbox{ in } \,\R^{N}.
\end{equation}
If $p=\mu=2$, $V(x)\equiv 1$, $F(u)=\frac{u^{2}}{2}$ and $N=3$, \eqref{GCE} is called the Choquard-Pekar equation  which goes back  to the 1954's work by Pekar \cite{Pek} to the description of a polaron at rest in Quantum Field Theory and to 1976's model of Choquard of an electron trapped in its own hole as an approximation to Hartree-Fock theory for a one-component plasma \cite{LS}. The same equation was proposed by Penrose \cite{Pen} as a model of self-gravitating matter and is known in that context as the Schr\"odinger-Newton equation.
Lieb in \cite{Lieb} proved the existence and uniqueness of positive solutions to a Choquard-Pekar equation. 
Subsequently, Lions \cite{Lions} established a multiplicity result via variational methods. Ackermann in \cite{Ack} proved the existence and multiplicity of solutions for \eqref{GCE} when $V$ is periodic.
Ma and Zhao \cite{MZ} showed that, up to translations, positive solutions of equation \eqref{GCE} with $f(u)=u^{p}$, are radially symmetric and monotone decreasing for suitable values of $\mu$, $N$ and $p$. This results has been improved by Moroz and Van Schaftingen in \cite{MVS1}. 
The same authors in \cite{MVS2} obtained the existence of ground state solutions with a general nonlinearity $f$. 
Cingolani et al. \cite{CSS} showed the existence of multi-bump type solutions for a Schro\"odinger equation in presence of electric and magnetic potentials and Hartree-type nonlinearities.
Alves et al. \cite{AFY}, inspired by \cite{AFF, CSS}, studied the multiplicity and concentration phenomena of solutions for \eqref{GCE} in presence of a magnetic field. For a more detailed bibliography on the Choquard equation we refer to \cite{MVS3}.

Motivated by \cite{AFY, Apota, AD}, in this paper we focus our attention on the existence and concentration of solutions to \eqref{P} under local conditions on the potential $V$. Before stating our main result, we introduce the assumptions on $V$ and $f$.
Along the paper, we assume that the potential $V: \R^{N}\rightarrow \R$ is a continuous function verifying the following conditions introduced in \cite{DPF}:
\begin{enumerate}
\item [$(V_1)$] $V(x)\geq V_{0}>0$ for all $x\in \R^{N}$;
\item [$(V_2)$] there exists a bounded open set $\Lambda\subset \R^{N}$ such that 
$$
V_{0}=\inf_{x\in \Lambda} V(x)<\min_{x\in \partial \Lambda} V(x), 
$$
\end{enumerate}
and $f: \R\rightarrow \R$ is a continuous function such that $f(t)=0$ for $t<0$ and satisfies the following assumptions: 
\begin{enumerate}
\item [($f_1$)] $\displaystyle{\lim_{t\rightarrow 0} f(t)=0}$;
\item [($f_2$)] there exists $q\in (2, \frac{2^{*}_{s}}{2}(2-\frac{\mu}{N}))$, where $2^{*}_{s}=\frac{2N}{N-2s}$, such that $\displaystyle{\lim_{t\rightarrow \infty} \frac{f(t)}{t^{\frac{q-2}{2}}}=0}$;
\item [($f_3$)] the map $\displaystyle{t \mapsto f(t)}$ is increasing for every $t>0$.
\end{enumerate}
We point to that the restriction on $q$ in $(f_2)$ is related to the Hardy-Littlewood-Sobolev inequality:
\begin{theorem}\label{HLS}\cite{LL}
Let $r, t>1$ and $0<\mu<N$ such that $\frac{1}{r}+\frac{\mu}{N}+\frac{1}{t}=2$. Let $f\in L^{r}(\R^{N})$ and $h\in L^{t}(\R^{N})$. Then there exists a sharp constant $C(r, N, \mu, t)>0$ independent of $f$ and $h$ such that 
$$
\int_{\R^{N}}\int_{\R^{N}} \frac{f(x)h(y)}{|x-y|^{\mu}}\, dx dy\leq C(r, N, \mu, t)\|f\|_{L^{r}(\R^{N})}\|h\|_{L^{t}(\R^{N})}.
$$
\end{theorem}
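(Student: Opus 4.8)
The plan is to reduce the bilinear estimate to a boundedness property of the Riesz‑type potential operator
\[
  I_{\mu}g(x):=\int_{\R^{N}}\frac{g(y)}{|x-y|^{\mu}}\,dy,
\]
and then to prove that property by a kernel‑splitting argument built on the Hardy--Littlewood maximal function $Mg(x):=\sup_{\rho>0}\frac{1}{|B_{\rho}(x)|}\int_{B_{\rho}(x)}|g|$. First, by Hölder's inequality,
\[
  \int_{\R^{N}}\int_{\R^{N}}\frac{f(x)h(y)}{|x-y|^{\mu}}\,dx\,dy=\int_{\R^{N}}f(x)\,(I_{\mu}h)(x)\,dx\le\|f\|_{L^{r}(\R^{N})}\,\|I_{\mu}h\|_{L^{r'}(\R^{N})},
\]
so it suffices to show that $I_{\mu}$ maps $L^{t}(\R^{N})$ into $L^{r'}(\R^{N})$ boundedly, where $\frac{1}{r'}=1-\frac1r=\frac1t+\frac{\mu}{N}-1$; this is precisely the exponent relation in the hypothesis, and the assumptions $r,t>1$ and $0<\mu<N$ ensure that $r'\in(1,\infty)$ and $\frac{1}{r'}\in(0,1)$. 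Replacing $f,h$ by $|f|,|h|$ we may assume $f,h\ge 0$, and after rescaling we may assume $\|h\|_{L^{t}(\R^{N})}=1$.

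Next, for each $x$ I would split the kernel at a scale $\delta=\delta(x)>0$ to be chosen:
\[
  I_{\mu}h(x)=\int_{|x-y|\le\delta}\frac{h(y)}{|x-y|^{\mu}}\,dy+\int_{|x-y|>\delta}\frac{h(y)}{|x-y|^{\mu}}\,dy.
\]
Decomposing the ball $\{|x-y|\le\delta\}$ into dyadic annuli $\{2^{-k-1}\delta<|x-y|\le 2^{-k}\delta\}$ and estimating the integral of $h$ over each by $|B_{2^{-k}\delta}(x)|\,Mh(x)$ bounds the first term by $C\,\delta^{N-\mu}\,Mh(x)$, the geometric series converging since $N-\mu>0$. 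For the second term, Hölder's inequality in $y$ with exponents $t$ and $t'$ gives the bound $\big(\int_{|z|>\delta}|z|^{-\mu t'}\,dz\big)^{1/t'}=C\,\delta^{N/t'-\mu}$, where the integral is finite because $\mu t'>N$ (equivalently $t<\frac{N}{N-\mu}$, which follows from $\frac1t=2-\frac1r-\frac{\mu}{N}>1-\frac{\mu}{N}$), and $N/t'-\mu<0$. Choosing $\delta=(Mh(x))^{-t/N}$ to balance the two bounds yields, after a short computation with the exponents, the pointwise inequality $I_{\mu}h(x)\le C\,(Mh(x))^{t/r'}$. Raising to the power $r'$, integrating over $\R^{N}$, and using the Hardy--Littlewood maximal theorem $\|Mh\|_{L^{t}(\R^{N})}\le C_{t}\|h\|_{L^{t}(\R^{N})}$ (valid for $t>1$),
\[
  \|I_{\mu}h\|_{L^{r'}(\R^{N})}^{r'}\le C^{r'}\int_{\R^{N}}(Mh)^{t}\,dx=C^{r'}\,\|Mh\|_{L^{t}(\R^{N})}^{t}\le C',
\]
so $\|I_{\mu}h\|_{L^{r'}(\R^{N})}\le C''$; undoing the normalization gives $\|I_{\mu}h\|_{L^{r'}(\R^{N})}\le C''\|h\|_{L^{t}(\R^{N})}$ for all $h$, and hence the asserted inequality with $C(r,N,\mu,t)=C''$.

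The existence of a \emph{smallest} admissible constant is then immediate by taking the infimum over all $C$ that work, and this is all that is used in the rest of the paper. I expect the only genuinely non‑elementary input in the argument above to be the $L^{t}$‑boundedness of the maximal operator $M$, which in turn rests on the Vitali covering lemma and Marcinkiewicz interpolation between the weak‑$(1,1)$ and trivial $(\infty,\infty)$ bounds. If instead one insists on the \emph{sharp value} of $C(r,N,\mu,t)$ and on the extremal functions, the main obstacle becomes a different one: one first invokes the Riesz rearrangement inequality to reduce to radially symmetric nonincreasing $f$ and $h$, and then, in the conformally invariant diagonal case $r=t=\frac{2N}{2N-\mu}$, runs the competing‑symmetries/conformal‑invariance argument of Lieb to identify the optimizers as translates and dilates of $x\mapsto(1+|x|^{2})^{-(2N-\mu)/2}$; for the applications in this paper the elementary route above suffices, so I would not pursue this refinement.
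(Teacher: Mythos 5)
Your argument is correct: it is the classical Hedberg-type proof of Hardy--Littlewood--Sobolev, reducing by duality to the $L^{t}\to L^{r'}$ boundedness of the Riesz potential $I_{\mu}$ and proving that via the pointwise bound $I_{\mu}h\leq C(Mh)^{t/r'}$ (kernel split at $\delta=(Mh(x))^{-t/N}$, dyadic annuli for the near part, H\"older for the far part, using $\mu t'>N$ which you correctly deduced from $r>1$) together with the Hardy--Littlewood maximal theorem; the exponent bookkeeping checks out, since $1-t+t\mu/N=t/r'$. Note, however, that the paper does not prove this statement at all: it is quoted as Theorem 2.1 directly from Lieb--Loss \cite{LL}, and only the existence of \emph{some} constant is ever used (in \eqref{chiara}, Lemma \ref{MPG}, etc.). So your proof is a genuinely different, and more elementary, route than the cited source: Lieb--Loss argue via the layer-cake representation and rearrangement inequalities, which yields an explicit constant and, in the conformally invariant diagonal case $r=t=\frac{2N}{2N-\mu}$, the sharp one with identification of the optimizers; your maximal-function argument is shorter and self-contained modulo the maximal theorem, but produces no explicit or sharp constant. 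You handle the word ``sharp'' in the statement correctly by observing that the optimal constant is obtained as the infimum of admissible constants and that sharpness plays no role in the paper; the only cosmetic point is that for sign-changing $f,h$ the left-hand side should carry absolute values (or one assumes $f,h\geq 0$), exactly as you do at the start of your argument.
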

Indeed, by $(f_1)$ and $(f_2)$ it follows that $|F(|u|^{2})|\leq C(|u|^{2}+|u|^{q})$, so it is easy to check that 
the term
\begin{equation}\label{chiara}
\left|\int_{\R^{N}} \left( \frac{1}{|x|^{\mu}}* F(|u|^{2})\right) F(|u|^{2}) dx\right|<\infty \quad \forall u\in \h,
\end{equation}
where $\h$ is defined in Section $2$, when $F(|u|^{2})\in L^{t}(\R^{N})$ for all $t>1$ such that
$$
\frac{2}{t}+\frac{\mu}{N}=2, \, \mbox{ that is } t=\frac{2N}{2N-\mu}.
$$ 
Therefore, if $q\in (2, \frac{2^{*}_{s}}{2}(2-\frac{\mu}{N}))$ and $\mu\in (0, 2s)$ we can use the fractional Sobolev embedding $H^{s}(\R^{N}, \R)\subset L^{r}(\R^{N}, \R)$ for all $r\in [2, \2]$, to deduce that $tq\in (2, 2^{*}_{s})$ and then \eqref{chiara} holds true. \\
Now, we can state the main result of this paper:
\begin{theorem}\label{thm1}
Suppose that $V$ verifies $(V_1)$-$(V_2)$,  $0<\mu<2s$ and $f$ satisfies $(f_1)$-$(f_3)$ with $q\in (2, 2\frac{(N-\mu)}{N-2s} )$. Then there exists $\e_{0}>0$ such that, for any $\e\in (0, \e_{0})$, problem  \eqref{P} has a nontrivial solution. Moreover, if $|u_{\e}|$ denotes one of these solutions and $x_{\e}\in \R^{N}$ its global maximum, then 
$$
\lim_{\e\rightarrow 0} V(x_{\e})=V_{0},
$$
and
\begin{align*}
|u_{\e}(x)|\leq \frac{\tilde{C} \e^{N+2s}}{\e^{N+2s}+|x-x_{\e}|^{N+2s}} \quad \forall x\in \R^{N}.
\end{align*}
\end{theorem}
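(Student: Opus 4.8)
The plan is to follow the by-now-classical penalization scheme of del Pino–Felmer, adapted to the fractional magnetic setting and the Choquard nonlinearity. First I would perform the change of variable $u(x)\mapsto u(\e x)$ to rewrite \eqref{P} as
$$
(-\Delta)^{s}_{A_{\e}} u + V(\e x)u = \left(\frac{1}{|x|^{\mu}}*F(|u|^{2})\right)f(|u|^{2})u \quad\text{in }\R^{N},
$$
with $A_{\e}(x)=A(\e x)$, whose solutions correspond to those of the original problem. Then, fixing a small constant $k>0$ and the set $\Lambda$ from $(V_2)$, I would define the penalized nonlinearity $g(x,t)=\chi_{\Lambda/\e}(x)f(t)t+(1-\chi_{\Lambda/\e}(x))\widetilde{f}(t)t$, where $\widetilde{f}$ is the truncation of $f$ at the level where $f(t)=V_{0}/k$, so that the modified energy functional $J_{\e}$ on the magnetic space $\h$ (the space built from the Gagliardo magnetic seminorm plus the weighted $L^{2}$-norm, defined in Section 2) has the mountain pass geometry and satisfies the Palais–Smale condition. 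Here the Hardy–Littlewood–Sobolev inequality (Theorem \ref{HLS}) together with $(f_1)$–$(f_2)$ and the diamagnetic inequality guarantees that the Choquard term is well defined, weakly continuous, and subcritical, which is exactly what \eqref{chiara} and the discussion around it provide.

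Next I would produce a critical point $u_{\e}$ of $J_{\e}$ at the mountain pass level $c_{\e}$, and carry out the comparison $c_{\e}\to c_{V_0}$ as $\e\to0$, where $c_{V_0}$ is the mountain pass level of the limiting autonomous problem with constant potential $V_{0}$ and no magnetic field; the diamagnetic inequality gives one inequality, and a suitable test function built from the ground state of the autonomous problem (cut off and phase-corrected by $e^{\imath A(\e x_0)\cdot x}$-type factors) gives the other. The core of the argument is then to show that these solutions actually solve the original (un-penalized) equation for $\e$ small, i.e. that $|u_{\e}|$ stays below the penalization threshold $k$ outside $\Lambda/\e$. This is done by establishing, via a concentration–compactness / translation argument, that (up to translations $y_{\e}$) the functions $|u_{\e}(\cdot+y_{\e})|$ converge in $H^{s}(\R^{N},\R)$ to a ground state of the autonomous problem, that $\e y_{\e}\to x_0$ with $V(x_0)=V_{0}$, hence $y_{\e}\in\Lambda/\e$ eventually, and then using uniform decay estimates on $|u_{\e}|$ to control it far from $y_{\e}$.

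The decay estimate $|u_{\e}(x)|\le \tilde C\,\e^{N+2s}/(\e^{N+2s}+|x-x_{\e}|^{N+2s})$ would follow by first obtaining a uniform $L^{\infty}$ bound on $|u_{\e}(\cdot+y_{\e})|$ (via a Moser/De Giorgi iteration or a Kato-type inequality applied to the diamagnetic inequality $|(-\Delta)^{s}_{A_\e}u|\ge$ controls $(-\Delta)^{s}|u|$ in the weak sense), then comparing $|u_{\e}|$ with an explicit fractional supersolution (a multiple of $1/(1+|x|^{N+2s})$, whose behavior under $(-\Delta)^{s}$ is known), and finally undoing the rescaling to recover the $\e$-dependent bound; the global maximum point $x_{\e}$ is $\e y_{\e}$ up to lower-order corrections, so $V(x_{\e})\to V_{0}$. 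I expect the main obstacle to be the compactness step: verifying the Palais–Smale condition for $J_{\e}$ and, more delicately, ruling out vanishing/dichotomy for the translated sequence so that $|u_{\e}(\cdot+y_{\e})|$ converges strongly to a nontrivial autonomous ground state — this is where the nonlocal Choquard term interacts awkwardly with the magnetic phase, and one must carefully exploit the translation invariance of the limit problem together with the strict inequality in $(V_2)$ to force the concentration point into $\Lambda$.
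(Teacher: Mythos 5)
Your proposal follows essentially the same route as the paper: the change of variables, a del Pino--Felmer penalization adapted to the Choquard term, mountain pass plus Palais--Smale for the modified functional, the comparison $\limsup_{\e\to 0}c_{\e}\leq c_{V_{0}}$ via a phase-corrected cut-off of the autonomous ground state and the diamagnetic inequality, strong convergence of the translated moduli forcing the concentration point into $\Lambda$ with $V(y_{0})=V_{0}$, and finally a Moser iteration together with a Kato-type subsolution argument and comparison with a supersolution decaying like $|x|^{-(N+2s)}$ to obtain the uniform decay and the stated estimate after rescaling. The only ingredient you leave implicit is the uniform $L^{\infty}$ bound on the convolution term on the relevant bounded set (the paper's Lemma \ref{lemK}), which is what fixes the truncation level $\ell_{0}$ and makes the penalization comparison outside $\Lambda$ and the subsolution estimates go through; with that supplied, your outline matches the paper's proof.
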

\begin{remark}
Assuming $f\in C^{1}$, one can use Ljusternick-Schnirelmann theory and argue as in \cite{Apota, AD} to relate the number of nontrivial solutions to \eqref{P} with the topology of the set where the potential attains its minimum value.
\end{remark}
The proof of Theorem \ref{thm1} is inspired by some variational arguments used in \cite{AFF, AFY, AM, Apota}. Anyway, the presence of the fractional magnetic Laplacian and nonlocal Hartree-type nonlinearity does not permit to easily adapt  in our setting the techniques developed in the above cited papers and, as explained in what follows, a more intriguing and accurate analysis will be needed. Firstly, after a change of variable, it is easy to check that problem (\ref{P}) is equivalent to the following one:
\begin{equation}\label{R}
(-\Delta)^{s}_{A_{\e}} u + V_{\e}(x)u =  \left(\frac{1}{|x|^{\mu}}*F(|u|^{2})\right)f(|u|^{2})u  \mbox{ in } \R^{N} 
\end{equation}
where $A_{\e}(x):=A(\e x)$ and $V_{\e}(x):=V(\e x)$. 
In the spirit of \cite{DPF} (see also \cite{AFF, AM}), we modify the nonlinearity in a suitable way and we consider an auxiliary problem. We note that the restriction imposed on $\mu$ allows us to use the penalization technique.
Without loss of generality, along the paper we will assume that $0\in \Lambda$ and $V_{0}=V(0)=\inf_{x\in\R^{N}} V(x)$.
Now, we fix $\ell>0$ large enough, which will be determined later on, and let $a>0$ be the unique number such that $f(a)=\frac{V_{0}}{\ell}$. Moreover, we introduce the functions
$$
\tilde{f}(t):=
\begin{cases}
f(t)& \text{ if $t \leq a$} \\
\frac{V_{0}}{\ell}   & \text{ if $t >a$},
\end{cases}
$$ 
and
$$
g(x, t):=\chi_{\Lambda}(x)f(t)+(1-\chi_{\Lambda}(x))\tilde{f}(t),
$$
where $\chi_{\Lambda}$ is the characteristic function on $\Lambda$, and  we write $G(x, t)=\int_{0}^{t} g(x, \tau)\, d\tau$.\\
From assumptions $(f_1)$-$(f_3)$, it is easy to verify that $g$ fulfills the following properties:
\begin{enumerate}
\item [($g_1$)] $\displaystyle{\lim_{t\rightarrow 0} g(x, t)=0}$ uniformly in $x\in \R^{N}$;
\item [($g_2$)] $\displaystyle{\lim_{t\rightarrow \infty} \frac{g(x, t)}{t^{\frac{q-2}{2}}}=0}$ uniformly in $x\in \R^{N}$;
\item [($g_3$)] $(i)$ $0\leq  G(x, t)< g(x, t)t$ for any $x\in \Lambda$ and $t>0$, and \\
$(ii)$ $0<G(x, t)\leq g(x, t)t\leq \frac{V_{0}}{\ell}t$ for any $x\in \R^{N}\setminus \Lambda$ and $t>0$,
\item [($g_4$)] $t\mapsto g(x, t)$ and $t\mapsto \frac{G(x, t)}{t}$ are increasing for all $x\in \R^{N}$ and $t>0$.
\end{enumerate}
Thus, we consider the following auxiliary problem 
\begin{equation*}
(-\Delta)^{s}_{A_{\e}} u + V_{\e}(x)u =  \left(\frac{1}{|x|^{\mu}}*G(\e x, |u|^{2})\right)g(\e x, |u|^{2})u \mbox{ in } \R^{N},
\end{equation*}
and in view of the definition of $g$, we are led to seek solutions $u$ of the above problem such that 
\begin{equation}\label{ue}
|u(x)|<a \mbox{ for all } x\in \R^{N}\setminus \Lambda_{\e}, \quad \mbox{ where }  \Lambda_{\e}:=\{x\in \R^{N}: \e x\in \Lambda\}.
\end{equation}
By using this penalization technique and establishing some careful estimates on the convolution term, we are able to prove that the energy functional associated with the auxiliary problem has a mountain pass geometry and satisfies the Palais-Smale condition; see Lemma \ref{MPG}, \ref{lemK} and \ref{PSc}. Then we can apply the Mountain Pass Theorem \cite{AR} to obtain the existence of a nontrivial solution $u_{\e}$ to the modified problem. 
The H\"older regularity assumption on the magnetic field $A$ and the fractional diamagnetic inequality \cite{DS}, will be properly exploited to show an interesting and useful relation between the mountain pass minimax level $c_{\e}$ of the modified functional and the minimax level $c_{V_{0}}$ associated with the limit functional; see Lemma \ref{AMlem1}. 
In order to verify that $u_{\e}$ is also solution of the original problem \eqref{P}, we need to check that $u_{\e}$ verifies \eqref{ue} for $\e>0$ sufficiently small. 
To achieve our goal, we first use an appropriate Moser iterative scheme \cite{Moser} to show that $\|u_{\e}\|_{L^{\infty}(\R^{N})}$ is bounded uniformly  with respect to $\e$. In these estimates, we take care of the fact that the convolution term 
is a bounded term in view of Lemma \ref{lemK}.
After that, we use these informations to develop a very clever approximation argument related in some sense to the following fractional version of Kato's inequality \cite{Kato}
$$
(-\Delta)^{s}|u|\leq \Re(sign(u)(-\Delta)^{s}_{A}u),
$$
to show that $|u_{\e}|$ is a weak subsolution to the problem
$$
(-\Delta)^{s}|u|+V(x)|u|= h(|u|^{2})|u| \mbox{ in } \R^{N},
$$
for some subcritical nonlinearity $h$, and then we prove that $|u_{\e}(x)|\rightarrow 0$ as $|x|\rightarrow \infty$, uniformly in $\e$; see Lemma \ref{moser}.  We point out that our arguments are different from the ones used in the classical case $s=1$ and the fractional setting $s\in (0,1)$ without magnetic field. Indeed, we don't know if a Kato's inequality is available in our framework, so we can not proceed as in \cite{CS, K} in which the Kato's inequality is combined with some standard elliptic estimates to obtain informations on the decay of solutions. Moreover, the appearance of magnetic field $A$  and the nonlocal character of $(-\Delta)^{s}_{A}$ do not permit to adapt the iteration argument developed in \cite{AFF, AFY} where $s=1$ and $A\not \equiv 0$, and we can not use the well-known estimates based on the Bessel kernel  (see \cite{AM, FQT}) established for fractional Schr\"odinger equations with $A=0$.
However, we believe that the ideas contained here can be also applied to deal with other fractional magnetic problems like \eqref{P}. Finally, we also give an estimate on the decay of modulus of solutions to \eqref{P} which is in clear accordance with the results in \cite{FQT}.
To the best of our knowledge, this is the first time that the penalization method is used to study nontrivial solutions for fractional Choquard equations with magnetic fields, and this represents the novelty of this work.\\
The paper is organized as follows: in Section $2$ we present some preliminary results and we collect some useful lemmas.
The Section $3$ is devoted to the proof of Theorem \ref{thm1}.

\section{Preliminaries and functional setting}
\noindent
For any $s\in (0,1)$, we denote by $\mathcal{D}^{s, 2}(\R^{N}, \R)$ the completion of $C^{\infty}_{0}(\R^{N}, \R)$ with respect to
$$
[u]^{2}=\iint_{\R^{2N}} \frac{|u(x)-u(y)|^{2}}{|x-y|^{N+2s}} \, dx \, dy =\|(-\Delta)^{\frac{s}{2}} u\|^{2}_{L^{2}(\R^{N})},
$$
that is
$$
\mathcal{D}^{s, 2}(\R^{N}, \R)=\left\{u\in L^{2^{*}_{s}}(\R^{N},\R): [u]_{H^{s}(\R^{N})}<\infty\right\}.
$$
Let us introduce the fractional Sobolev space
$$
H^{s}(\R^{N}, \R)= \left\{u\in L^{2}(\R^{N}) : \frac{|u(x)-u(y)|}{|x-y|^{\frac{N+2s}{2}}} \in L^{2}(\R^{2N}) \right \}
$$
endowed with the natural norm 
$$
\|u\| = \sqrt{[u]^{2} + \|u\|_{L^{2}(\R^{N})}^{2}}.
$$
Let us denote by $L^{2}(\R^{N}, \C)$ the space of complex-valued functions with summable square, endowed with the real
scalar product 
$$
\langle u, v\rangle_{L^{2}}=\Re\left(\int_{\R^{N}} u \bar{v} dx\right)
$$
for all $u, v\in L^{2}(\R^{N}, \C)$.
We consider the space
$$
\mathcal{D}^{s}_{A}(\R^{N}, \C)=\{u\in L^{2^{*}_{s}}(\R^{N}, \C) : [u]_{A}<\infty\}
$$
where
$$
[u]_{A}^{2}=\iint_{\R^{2N}} \frac{|u(x)-u(y)e^{\imath A(\frac{x+y}{2})\cdot (x-y)}|^{2}}{|x-y|^{N+2s}} dx dy.
$$
Then, we define the following fractional magnetic Sobolev space
$$
H^{s}_{A}(\R^{N}, \C)=\{u\in L^{2}(\R^{N}, \C): [u]_{A}<\infty\}.
$$
It is easy to check that $H^{s}_{A}(\R^{N}, \C)$ is a Hilbert space with the real scalar product
\begin{align*}
\langle u, v\rangle_{s, A}&=\Re\iint_{\R^{2N}} \frac{(u(x)-u(y)e^{\imath A(\frac{x+y}{2})\cdot (x-y)})\overline{(v(x)-v(y)e^{\imath A(\frac{x+y}{2})\cdot (x-y)})}}{|x-y|^{N+2s}} dx dy\\
&\quad +\langle u, v\rangle_{L^{2}}
\end{align*}
for any $u, v\in H^{s}_{A}(\R^{N}, \C)$. Moreover, $C^{\infty}_{c}(\R^{N}, \C)$ is dense in $H^{s}_{A}(\R^{N}, \C)$ (see \cite{AD}).
Now, we recall the following useful results:
\begin{theorem}\label{Sembedding}\cite{DS}
The space $H^{s}_{A}(\R^{N}, \C)$ is continuously embedded into $L^{r}(\R^{N}, \C)$ for any $r\in [2, \2]$ and compactly embedded into $L^{r}(K, \C)$ for any $r\in [1, \2)$ and any compact $K\subset \R^{N}$.
\end{theorem}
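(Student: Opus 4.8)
The plan is to deduce both assertions from the fractional diamagnetic inequality, thereby reducing everything to the scalar fractional Sobolev theory. First I would record the pointwise estimate
$$
\bigl| |u(x)|-|u(y)| \bigr| \le \Bigl| u(x)-u(y)e^{\imath A(\frac{x+y}{2})\cdot(x-y)} \Bigr| \qquad \text{for a.e. } x,y\in\R^{N},
$$
which is nothing but $\bigl||a|-|b|\bigr|\le|a-b\omega|$ with $|\omega|=1$. Integrating it against $|x-y|^{-N-2s}$ gives $[\,|u|\,]\le[u]_{A}$, while $\|\,|u|\,\|_{L^{2}}=\|u\|_{L^{2}}$; hence $u\mapsto|u|$ maps $H^{s}_{A}(\R^{N},\C)$ continuously into $H^{s}(\R^{N},\R)$. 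Since $\|u\|_{L^{r}(\R^{N})}=\|\,|u|\,\|_{L^{r}(\R^{N})}$ for every $r$, the continuous embedding $H^{s}_{A}(\R^{N},\C)\hookrightarrow L^{r}(\R^{N},\C)$ for $r\in[2,\2]$ is then immediate from the classical fractional Sobolev embedding $H^{s}(\R^{N},\R)\hookrightarrow L^{r}(\R^{N},\R)$.

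For the local compactness the obstacle is that replacing $u_{n}$ by $|u_{n}|$ discards the phase, so I would localize by a gauge change. Fix a compact $K\subset\R^{N}$ and a bounded sequence $(u_{n})$ in $H^{s}_{A}(\R^{N},\C)$. For $x_{0}\in\R^{N}$ and $\rho>0$ put $B:=B_{\rho}(x_{0})$, $v(x):=e^{-\imath A(x_{0})\cdot x}u(x)$ and $\theta(x,y):=(A(\frac{x+y}{2})-A(x_{0}))\cdot(x-y)$; a direct computation yields
$$
u(x)-u(y)e^{\imath A(\frac{x+y}{2})\cdot(x-y)} = e^{\imath A(x_{0})\cdot x}\bigl(v(x)-v(y)e^{\imath\theta(x,y)}\bigr),
$$
so that $|u(x)-u(y)e^{\imath A(\frac{x+y}{2})\cdot(x-y)}|=|v(x)-v(y)e^{\imath\theta(x,y)}|$. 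Using $|v(x)-v(y)|\le|v(x)-v(y)e^{\imath\theta(x,y)}|+|v(y)|\,|\theta(x,y)|$ and $|\theta(x,y)|\le M_{B}\,|x-y|$, with $M_{B}:=\sup_{z\in B}|A(z)-A(x_{0})|<\infty$ by continuity of $A$, one obtains
$$
\iint_{B\times B}\frac{|v(x)-v(y)|^{2}}{|x-y|^{N+2s}}\,dx\,dy \;\le\; 2\,[u]_{A}^{2}\;+\;2M_{B}^{2}\iint_{B\times B}\frac{|v(y)|^{2}}{|x-y|^{N+2s-2}}\,dx\,dy.
$$
The decisive point is that $N+2s-2<N$ because $s<1$, hence $\int_{B}|x-y|^{2-N-2s}\,dx\le C_{N,s}\rho^{2-2s}$ and the last term is bounded by $C_{N,s}M_{B}^{2}\rho^{2-2s}\|u\|_{L^{2}(B)}^{2}$; therefore the restrictions $v_{n}|_{B}$ are bounded in $H^{s}(B,\C)$ whenever $(u_{n})$ is bounded in $H^{s}_{A}(\R^{N},\C)$.

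To conclude I would cover $K$ by finitely many balls $B_{j}=B_{\rho}(x_{j})$, $j=1,\dots,m$. On each $B_{j}$ the sequence $v^{j}_{n}:=e^{-\imath A(x_{j})\cdot x}u_{n}$ is bounded in $H^{s}(B_{j},\C)$, so by the fractional Rellich--Kondrachov theorem on the bounded Lipschitz domain $B_{j}$ it admits a subsequence converging strongly in $L^{r}(B_{j},\C)$ for every $r\in[1,\2)$; undoing the gauge, $u_{n}=e^{\imath A(x_{j})\cdot x}v^{j}_{n}$ converges along that subsequence in $L^{r}(B_{j},\C)$, multiplication by the unimodular factor being an $L^{r}$-isometry. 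A diagonal extraction over $j=1,\dots,m$ then produces a single subsequence converging in $L^{r}(B_{j},\C)$ for all $j$, hence in $L^{r}(K,\C)$, which is the asserted compactness. I expect the only genuinely delicate step to be the local seminorm estimate of the second paragraph: once the kernel singularity is absorbed by the inequality $|e^{\imath\theta}-1|\le|\theta|$ together with the condition $s<1$, the remainder is a routine reduction to the real fractional Sobolev embedding and its compact counterpart.
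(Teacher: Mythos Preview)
The paper does not supply its own proof of this theorem: it is quoted verbatim from \cite{DS} (d'Avenia--Squassina) and used as a black box, so there is nothing in the paper to compare your argument against line by line.

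That said, your proposal is correct and is essentially the argument one finds in the source. The continuous embedding via the diamagnetic inequality is exactly Lemma~\ref{DI} of the present paper combined with the scalar embedding $H^{s}(\R^{N},\R)\hookrightarrow L^{r}(\R^{N},\R)$; nothing more is needed there. For the compact part you correctly observe that the diamagnetic inequality alone only controls $|u_{n}|$, not $u_{n}$, and your local gauge change $v=e^{-\imath A(x_{0})\cdot x}u$ is the standard remedy. The only analytic point worth flagging is the one you already isolate: the remainder kernel $|x-y|^{2-N-2s}$ is locally integrable precisely because $s<1$, so the term $M_{B}^{2}\iint_{B\times B}|v(y)|^{2}|x-y|^{2-N-2s}\,dx\,dy$ is indeed dominated by $C\rho^{2-2s}\|u\|_{L^{2}(B)}^{2}$. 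After that the finite-cover plus diagonal extraction is routine. One small cosmetic remark: since $x,y\in B$ implies $\tfrac{x+y}{2}\in B$ by convexity, your definition $M_{B}=\sup_{z\in B}|A(z)-A(x_{0})|$ is the right constant and requires only continuity of $A$, not the H\"older regularity assumed elsewhere in the paper.
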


\begin{lemma}\label{DI}\cite{DS}
For any $u\in H^{s}_{A}(\R^{N}, \C)$, we get $|u|\in H^{s}(\R^{N},\R)$ and it holds
$$
[|u|]\leq [u]_{A}.
$$
We also have the following pointwise diamagnetic inequality 
$$
||u(x)|-|u(y)||\leq |u(x)-u(y)e^{\imath A(\frac{x+y}{2})\cdot (x-y)}| \mbox{ a.e. } x, y\in \R^{N}.
$$
\end{lemma}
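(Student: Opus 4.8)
The plan is to establish first the pointwise inequality, from which the seminorm estimate and the membership $|u|\in H^{s}(\R^{N},\R)$ both follow by a one-line integration argument; the heart of the matter is an elementary inequality for complex numbers combined with the observation that the magnetic phase factor has modulus one, so I expect no substantial obstacle beyond phrasing the ``a.e.'' statement correctly.

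\emph{Step 1: the pointwise diamagnetic inequality.} Fix $x,y\in\R^{N}$ with $x\neq y$ and (working with a measurable representative of $u$) with $u(x),u(y)$ both finite, which holds for a.e.\ such pair. Set $\theta:=A\!\left(\frac{x+y}{2}\right)\cdot(x-y)\in\R$, so that $|u(y)e^{\imath\theta}|=|u(y)|$ because $|e^{\imath\theta}|=1$. Applying the reverse triangle inequality $\big||z|-|w|\big|\leq|z-w|$ in $\C$ with $z=u(x)$ and $w=u(y)e^{\imath\theta}$ yields
$$
\big||u(x)|-|u(y)|\big|=\big||u(x)|-|u(y)e^{\imath\theta}|\big|\leq\big|u(x)-u(y)e^{\imath\theta}\big|,
$$
which is precisely the asserted pointwise bound.

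\emph{Step 2: passing to the integral inequality.} I would square the pointwise bound, divide by $|x-y|^{N+2s}$, and integrate over $\R^{2N}$. The right-hand integrand is nonnegative and measurable, since $(x,y)\mapsto A(\frac{x+y}{2})\cdot(x-y)$ is continuous (as $A$ is continuous) and $u$ is measurable, so monotonicity of the integral gives
$$
[|u|]^{2}=\iint_{\R^{2N}}\frac{\big||u(x)|-|u(y)|\big|^{2}}{|x-y|^{N+2s}}\,dx\,dy\leq\iint_{\R^{2N}}\frac{\big|u(x)-u(y)e^{\imath A(\frac{x+y}{2})\cdot(x-y)}\big|^{2}}{|x-y|^{N+2s}}\,dx\,dy=[u]_{A}^{2},
$$
i.e.\ $[|u|]\leq[u]_{A}$.

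\emph{Step 3: conclusion.} If $u\in H^{s}_{A}(\R^{N},\C)$ then $[u]_{A}<\infty$, whence Step 2 forces $[|u|]<\infty$; moreover $\||u|\|_{L^{2}(\R^{N})}=\|u\|_{L^{2}(\R^{N})}<\infty$. Thus $|u|\in L^{2}(\R^{N},\R)$ has finite Gagliardo seminorm, that is $|u|\in H^{s}(\R^{N},\R)$, with the stated bound. The only genuinely delicate points are the careful reading of the ``a.e.\ $x,y$'' assertion in Step 1 and the measurability justification in Step 2; it is also worth noting that this particular inequality uses almost nothing about $A$ (mere measurability would do), whereas the $C^{0,\alpha}$ hypothesis on $A$ is exploited elsewhere in the paper.
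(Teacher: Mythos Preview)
Your argument is correct and is the standard route to the diamagnetic inequality: the pointwise bound is nothing but the reverse triangle inequality combined with $|e^{\imath\theta}|=1$, and the seminorm and membership statements follow by integration. The paper does not supply its own proof of this lemma---it is quoted from \cite{DS}---so there is nothing to compare against; your write-up is essentially the argument one finds in that reference.
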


\begin{lemma}\label{aux}\cite{AD}
If $u\in H^{s}(\R^{N}, \R)$ and $u$ has compact support, then $w=e^{\imath A(0)\cdot x} u \in H^{s}_{A}(\R^{N}, \C)$.
\end{lemma}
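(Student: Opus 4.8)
The plan is to verify directly the two defining requirements of membership in $H^{s}_{A}(\R^{N},\C)$ for $w=e^{\imath A(0)\cdot x}u$: that $w\in L^{2}(\R^{N},\C)$ and that the magnetic Gagliardo seminorm $[w]_{A}$ is finite. The first is immediate, since $|w(x)|=|u(x)|$ pointwise and $u\in H^{s}(\R^{N},\R)\subset L^{2}(\R^{N},\R)$, so $\|w\|_{L^{2}}=\|u\|_{L^{2}}<\infty$. All the work goes into bounding $[w]_{A}$.

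First I would rewrite the numerator of the integrand defining $[w]_{A}^{2}$. Using $A(0)\cdot y-A(0)\cdot x=-A(0)\cdot(x-y)$ and factoring out $e^{\imath A(0)\cdot x}$, one gets
$$
w(x)-w(y)e^{\imath A(\frac{x+y}{2})\cdot (x-y)}=e^{\imath A(0)\cdot x}\Bigl(u(x)-u(y)\,e^{\imath (A(\frac{x+y}{2})-A(0))\cdot (x-y)}\Bigr),
$$
so the modulus equals $\bigl|u(x)-u(y)\,e^{\imath (A(\frac{x+y}{2})-A(0))\cdot (x-y)}\bigr|$. By the triangle inequality this is $\le |u(x)-u(y)|+|u(y)|\,\bigl|1-e^{\imath (A(\frac{x+y}{2})-A(0))\cdot (x-y)}\bigr|$, and then $(a+b)^{2}\le 2a^{2}+2b^{2}$ gives
$$
[w]_{A}^{2}\le 2[u]^{2}+2\iint_{\R^{2N}}\frac{|u(y)|^{2}\,\bigl|1-e^{\imath (A(\frac{x+y}{2})-A(0))\cdot (x-y)}\bigr|^{2}}{|x-y|^{N+2s}}\,dx\,dy=:2[u]^{2}+2\,I.
$$
Since $u\in H^{s}$ we have $[u]^{2}<\infty$, so everything reduces to showing $I<\infty$.

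To estimate $I$, I would invoke the compact support of $u$, say $\supp u\subset B_{R}$, to restrict the $y$–integration to $B_{R}$, and then substitute $z=x-y$, so that $\frac{x+y}{2}=y+\frac{z}{2}$ and
$$
I=\int_{B_{R}}|u(y)|^{2}\left(\int_{\R^{N}}\frac{\bigl|1-e^{\imath (A(y+\frac{z}{2})-A(0))\cdot z}\bigr|^{2}}{|z|^{N+2s}}\,dz\right)dy.
$$
I would split the inner integral into $|z|\le 1$ and $|z|>1$. For $|z|\le 1$ and $y\in B_{R}$ one has $|y+\frac{z}{2}|\le R+\tfrac12$, hence $A(y+\frac{z}{2})$ stays in the compact ball $\overline{B_{R+1/2}}$ on which $A$ is bounded (continuity is enough; the H\"older bound $|A(y+\frac{z}{2})-A(0)|\le [A]_{\alpha}(R+\tfrac12)^{\alpha}$ works as well), so $\bigl|1-e^{\imath(\cdots)\cdot z}\bigr|\le |A(y+\frac{z}{2})-A(0)|\,|z|\le C_{R}\,|z|$ and the integrand is $\le C_{R}^{2}\,|z|^{2-N-2s}$, which is integrable near the origin precisely because $s<1$. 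For $|z|>1$ the crude bound $\bigl|1-e^{\imath(\cdots)}\bigr|\le 2$ makes the integrand $\le 4\,|z|^{-N-2s}$, integrable at infinity because $s>0$. Therefore the inner integral is bounded by a constant $C=C(R,A,N,s)$ independent of $y\in B_{R}$, and $I\le C\,\|u\|_{L^{2}}^{2}<\infty$. Combining, $[w]_{A}^{2}\le 2[u]^{2}+2C\|u\|_{L^{2}}^{2}<\infty$, and together with $w\in L^{2}$ this gives $w\in H^{s}_{A}(\R^{N},\C)$.

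I do not expect a genuine obstacle: the only delicate point is that the magnetic phase $A(\frac{x+y}{2})$ is evaluated at points that may escape to infinity, which is exactly why the compact support of $u$ — forcing the relevant evaluation points to stay in a fixed ball once $|z|\le 1$ — must be used \emph{before} estimating; after that, the near-diagonal and tail integrals are routine and use only $0<s<1$.
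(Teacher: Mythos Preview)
The paper does not prove this lemma; it is simply quoted from \cite{AD} without argument. Your proof is correct and self-contained: the algebraic rewriting of the numerator, the triangle-inequality splitting into $2[u]^{2}+2I$, and the near/far decomposition of the inner integral in $I$ are all valid, and the integrability checks use exactly $0<s<1$. The key observation---that compact support confines the midpoint $y+\tfrac{z}{2}$ to a fixed ball when $|z|\le 1$, so that only local boundedness (or H\"older continuity) of $A$ is needed---is precisely the right one.

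It is worth noting that the very same decomposition and near/far estimate appear later in the paper, in the proof of Lemma~\ref{AMlem1}, where the quantity $X_{\e}$ is bounded by splitting over $|x-y|\ge \e^{-\beta}$ and $|x-y|<\e^{-\beta}$ and using $|e^{\imath t}-1|^{2}\le 4$ on the far piece and $|e^{\imath t}-1|^{2}\le t^{2}$ on the near piece. So your argument is not only correct but exactly in the spirit of the techniques the paper (and presumably \cite{AD}) relies on.
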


\noindent
For any $\e>0$, we denote by 
$$
\h=\left\{u\in \mathcal{D}^{s}_{A_{\e}}(\R^{N}, \C): \int_{\R^{N}} V_{\e}(x) |u|^{2}\, dx<\infty\right\}
$$
endowed with the norm 
$$
\|u\|^{2}_{\e}=[u]^{2}_{A_{\e}}+\|\sqrt{V_{\e}} |u|\|^{2}_{L^{2}(\R^{N})}.
$$

\noindent
From now on, we consider the following auxiliary problem 
\begin{equation}\label{Pe}
(-\Delta)^{s}_{A_{\e}} u + V_{\e}(x)u =  \left(\frac{1}{|x|^{\mu}}*G(\e x, |u|^{2})\right)g(\e x, |u|^{2})u \mbox{ in } \R^{N}
\end{equation}
and we note that if $u$ is a solution of (\ref{Pe}) such that 
\begin{equation*}
|u(x)|<a \mbox{ for all } x\in \R^{N}\setminus \Lambda_{\e},
\end{equation*}
then $u$ is indeed solution of the original problem (\ref{R}).

It is clear that weak solutions to (\ref{Pe}) can be found as critical points of the Euler-Lagrange functional $J_{\e}: H^{s}_{\e}\rightarrow \R$ defined by
$$
J_{\e}(u)=\frac{1}{2}\|u\|^{2}_{\e}-\frac{1}{4}\int_{\R^{N}} \left(\frac{1}{|x|^{\mu}}*G(\e x, |u|^{2})\right)G(\e x, |u|^{2})\, dx.
$$
We begin proving that $J_{\e}$ possesses a mountain pass geometry \cite{AR}. 
\begin{lemma}\label{MPG}
$J_{\e}$ has a mountain pass geometry, that is
\begin{enumerate}
\item [$(i)$] there exist $\alpha, \rho>0$ such that $J_{\e}(u)\geq \alpha$ for any $u\in H^{s}_{\e}$ such that $\|u\|_{\e}=\rho$;
\item [$(ii)$] there exists $e\in H^{s}_{\e}$ with $\|e\|_{\e}>\rho$ such that $J_{\e}(e)<0$.
\end{enumerate}
\end{lemma}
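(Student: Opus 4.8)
**The plan is to verify the two standard Mountain Pass conditions directly from the structure of $J_\e$, using the growth hypotheses on $g$ and the Hardy–Littlewood–Sobolev inequality to control the quartic convolution term.**

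For part $(i)$, I would first observe that by $(g_1)$ and $(g_2)$, for every $\xi>0$ there is $C_\xi>0$ with $|g(x,t)|\le \xi + C_\xi t^{(q-2)/2}$ uniformly in $x$, hence $0\le G(x,t)\le \tfrac{\xi}{2}t + \tfrac{2C_\xi}{q}t^{q/2}$. Applying Theorem~\ref{HLS} with $r=t=\tfrac{2N}{2N-\mu}$ to the pair $F=h=G(\e\,\cdot,|u|^2)$ gives
$$
\int_{\R^N}\Bigl(\frac{1}{|x|^\mu}*G(\e x,|u|^2)\Bigr)G(\e x,|u|^2)\,dx \le C\,\bigl\|G(\e\,\cdot,|u|^2)\bigr\|_{L^{2N/(2N-\mu)}}^2.
$$
Expanding $G$ via the bound above and using Minkowski's inequality, this is controlled by $C\bigl(\xi\||u|^2\|_{L^{2N/(2N-\mu)}} + C_\xi\||u|^q\|_{L^{2N/(2N-\mu)}}\bigr)^2 = C\bigl(\xi\|u\|_{L^{4N/(2N-\mu)}}^2 + C_\xi\|u\|_{L^{2qN/(2N-\mu)}}^q\bigr)^2$. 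The condition $q\in\bigl(2,\tfrac{2^*_s}{2}(2-\tfrac{\mu}{N})\bigr)$ is exactly what guarantees that both exponents $\tfrac{4N}{2N-\mu}$ and $\tfrac{2qN}{2N-\mu}$ lie in $[2,2^*_s]$, so by Lemma~\ref{DI} (the diamagnetic inequality) together with Theorem~\ref{Sembedding} one gets $\|u\|_{L^r}\le C\||u|\|_{H^s}\le C\|u\|_\e$ for these $r$. Therefore
$$
J_\e(u)\ge \tfrac12\|u\|_\e^2 - C\xi^2\|u\|_\e^4 - C_\xi'\|u\|_\e^{4} - C_\xi''\|u\|_\e^{2q},
$$
and since $q>2$, choosing $\xi$ small and then $\rho>0$ small makes the right-hand side bounded below by some $\alpha>0$ on the sphere $\|u\|_\e=\rho$. (A slightly cleaner bookkeeping is to keep the $\xi$-term as $\tfrac14\|u\|_\e^2$ absorbed into the quadratic part; either way the conclusion is the same.)

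For part $(ii)$, I would fix a nonzero $u_0\in C_c^\infty(\R^N,\R)$ with $\supp u_0\subset\Lambda_\e$ (possible once $\e$ is small, since $0\in\Lambda$), and set $e=tu_0$ for $t$ large. On $\Lambda_\e$ we have $g(\e x,\cdot)=f$, and from $(f_3)$ together with $f(t)\to$ its limiting behavior one checks that $G(x,t)\ge c\,t^{\beta}$ for some $\beta>1$ and large $t$ — more precisely, monotonicity of $t\mapsto G(x,t)/t$ from $(g_4)$ gives $G(x,\sigma^2 t)\ge \sigma^2\, G(x,t)$ for $\sigma\ge 1$, so $G(\e x,t^2|u_0|^2)\ge t^2 G(\e x,|u_0|^2)$ on $\supp u_0$. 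Hence the convolution term grows at least like $t^4\int\bigl(\tfrac{1}{|x|^\mu}*G(\e x,|u_0|^2)\bigr)G(\e x,|u_0|^2)\,dx$, a fixed positive constant times $t^4$, while $\tfrac12\|tu_0\|_\e^2=\tfrac{t^2}{2}\|u_0\|_\e^2$ grows only quadratically. Thus $J_\e(tu_0)\to-\infty$ as $t\to\infty$, and picking $t$ large enough yields $e=tu_0$ with $\|e\|_\e>\rho$ and $J_\e(e)<0$.

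The main obstacle — really the only nonroutine point — is the estimate on the nonlocal term in part $(i)$: one must apply Hardy–Littlewood–Sobolev correctly, track how the two pieces of the bound on $G$ produce the Lebesgue exponents $\tfrac{4N}{2N-\mu}$ and $\tfrac{2qN}{2N-\mu}$, and verify that the upper restriction on $q$ places these in the range of the magnetic Sobolev embedding; the diamagnetic inequality is what bridges $\|u\|_{L^r}$ and $\|u\|_\e$. Everything else is the classical Ambrosetti–Rabinowitz geometry argument, with $(g_4)$ supplying the superquadratic growth of $G$ needed for $(ii)$.
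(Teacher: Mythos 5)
Your proof is correct and follows essentially the same route as the paper: part $(i)$ is the identical Hardy--Littlewood--Sobolev-plus-Sobolev estimate with exponent $\tfrac{2N}{2N-\mu}$, and part $(ii)$ obtains the same quartic lower bound for the nonlocal term on a test function supported in $\Lambda_{\e}$, only you derive it directly from the monotonicity of $G(x,t)/t$ in $(g_4)$ whereas the paper integrates the equivalent differential inequality $h'(t)/h(t)\geq 4/t$ coming from $F(t)\leq f(t)t$. The only cosmetic difference is that the paper takes $u_{0}e^{\imath A(0)\cdot x}$ and invokes Lemma \ref{aux} to place the test function in $H^{s}_{\e}$, while your real-valued $u_{0}\in C^{\infty}_{c}$ also lies there (e.g.\ since $C^{\infty}_{c}(\R^{N},\C)\subset H^{s}_{A}(\R^{N},\C)$), and no smallness of $\e$ is actually needed for its existence since $\Lambda_{\e}$ always contains a neighborhood of the origin.
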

\begin{proof}
By using $(g_1)$ and $(g_2)$ we know that for any $\eta>0$ there exists $C_{\eta}>0$ such that
\begin{equation}\label{g-estimate}
|g(\e x, t)|\leq \eta +C_{\eta} |t|^{\frac{q-2}{2}}.
\end{equation}
In view of Theorem \ref{HLS} and \eqref{g-estimate}, we can deduce that
\begin{align}\label{a1}
\left|\int_{\R^{N}} \left(\frac{1}{|x|^{\mu}}*G(\e x, |u|^{2})\right)G(\e x, |u|^{2})\, dx\right| \leq C\left(\int_{\R^{N}} (|u|^{2}+|u|^{q}\, dx)^{t}\right)^{\frac{2}{t}},
\end{align}
where $\frac{1}{t}=\frac{1}{2}(2-\frac{\mu}{N})$. Since $2<q<\frac{2^{*}_{s}}{2}(2-\frac{\mu}{N})$ we have $t q\in (2, 2^{*}_{s})$ and by using Theorem \ref{Sembedding}  we can see that
\begin{align}\label{a2}
\left(\int_{\R^{N}} (|u|^{2}+|u|^{q}\, dx)^{t}\right)^{\frac{2}{t}}\leq C(\|u\|^{2}_{\e}+\|u\|^{q}_{\e})^{2}.
\end{align}
Putting together \eqref{a1} and \eqref{a2} we get
\begin{align*}
\left|\int_{\R^{N}} \left(\frac{1}{|x|^{\mu}}*G(\e x, |u|^{2})\right)G(\e x, |u|^{2})\, dx\right|\leq C(\|u\|^{2}_{\e}+\|u\|^{q}_{\e})^{2}\leq C(\|u\|^{4}_{\e}+\|u\|^{2q}_{\e}).
\end{align*}
Hence
$$
J(u)\geq \frac{1}{2}\|u\|^{2}_{\e}-C(\|u\|^{4}_{\e}+\|u\|^{2q}_{\e}),
$$
and recalling that $q>2$ we can infer that $(i)$ is satisfied.\\
Now, take a nonnegative function $u_{0}\in H^{s}(\R^{N}, \R)\setminus\{0\}$ with compact support such that $supp(u_{0})\subset \Lambda_{\e}$. Then, by Lemma \ref{aux} we know that $u_{0}(x)e^{\imath A(0)\cdot x}\in H^{s}_{\e}\setminus\{0\}$. Set
$$
h(t)=\mathfrak{F}\left(\frac{t u_{0}}{\|u_{0}\|_{\e}}\right) \mbox{ for } t>0,
$$
where 
$$
\mathfrak{F}(u)=\frac{1}{4}\int_{\R^{N}} \left( \frac{1}{|x|^{\mu}}*F(|u|^{2}) \right) F(|u|^{2})\,dx.
$$
From $(f_3)$ we know that $F(t)\leq f(t)t$ for all $t>0$.
Then, being $G(\e x, |u_{0}|^{2})=F(|u_{0}|^{2})$, we deduce that
\begin{align}\label{a3}
\frac{h'(t)}{h(t)}\geq \frac{4}{t} \quad \forall t>0.
\end{align}
Integrating \eqref{a3} over $[1, t\|u_{0}\|_{\e}]$ with $t>\frac{1}{\|u_{0}\|_{\e}}$, we get
$$
\mathfrak{F}(t u_{0})\geq \mathfrak{F}\left(\frac{u_{0}}{\|u_{0}\|_{\e}}\right) \|u_{0}\|_{\e}^{4}t^{4}.
$$
Summing up
$$
J_{\e}(t u_{0})\leq C_{1} t^{2}-C_{2}t^{4} \mbox{ for } t>\frac{1}{\|u_{0}\|_{\e}}.
$$
Taking $e=t u_{0}$ with $t$ sufficiently large, we can see that $(ii)$ holds.
\end{proof}

\noindent
Denoting by $c_{\e}$ the mountain pass level of the functional $J_{\e}$ and recalling that $supp(u_{0})\subset \Lambda_{\e}$, we can find $\kappa>0$ independent of $\e, l, a$ such that 
$$
c_{\e}= \inf_{u\in H^{s}_{\e}\setminus \{0\}}\max_{t\geq 0} J_{\e}(tu) <\kappa
$$
for all $\e>0$ small.
Now, let us define
$$
\B=\{u\in H^{s}(\R^{N}): \|u\|^{2}_{\e}\leq 4(\kappa+1)\}
$$
and we set
$$
\tilde{K}_{\e}(u)(x)=\frac{1}{|x|^{\mu}}*G(\e x, |u|^{2}).
$$
The next lemma is very useful because allows us to treat the convolution term as a bounded term.
\begin{lemma}\label{lemK}
Assume that $(f_1)$-$(f_3)$ hold and $2<q<\frac{2(N-\mu)}{N-2s}$. Then there exists $\ell_{0}>0$ such that 
$$
\frac{\sup_{u\in \B} \|\tilde{K}_{\e}(u)(x)\|_{L^{\infty}(\R^{N})}}{\ell_{0}}<\frac{1}{2} \mbox{ for any } \e>0.
$$
\end{lemma}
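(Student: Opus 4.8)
The plan is to bound $\tilde K_\e(u)(x) = \bigl(\tfrac{1}{|x|^\mu}*G(\e\cdot, |u|^2)\bigr)(x)$ pointwise, uniformly in $x\in\R^N$, $\e>0$ and $u\in\B$, and then choose $\ell_0$ larger than twice that bound. The key observation is that $\ell_0$ enters the definition of $g$ and $G$ only through the truncation level $a$ (recall $f(a)=V_0/\ell$), and that $a\to\infty$ as $\ell\to\infty$; so the truncated part of $G$ can only get \emph{smaller} as $\ell$ grows. Hence it suffices to bound $\tilde K_\e(u)$ by a constant depending only on $N,s,\mu,q$ and the radius $4(\kappa+1)$ of $\B$, and not on $\ell$ or $a$. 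I would therefore first record the pointwise growth bound $0\le G(x,t)\le F(t)\le C(t+t^{q/2})$ coming from $(f_1)$, $(f_2)$ and $(g_3)$, valid for all $x\in\R^N$, so that $G(\e x,|u|^2)\le C(|u|^2+|u|^q)$.

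Next I would split the convolution integral over the regions $|x-y|\le 1$ and $|x-y|>1$. On $\{|x-y|>1\}$ the kernel $|x-y|^{-\mu}\le 1$, so that piece is bounded by $\int_{\R^N}G(\e x,|u|^2)\,dy \le C\bigl(\|u\|_{L^2}^2+\|u\|_{L^q}^q\bigr)$, which by the embedding $H^s_A\hookrightarrow L^r$ (Theorem~\ref{Sembedding}, with $2\le q<\2$ since $q<\tfrac{2(N-\mu)}{N-2s}<\2$) is controlled by $C(\|u\|_\e^2+\|u\|_\e^q)\le C'$ for $u\in\B$. On $\{|x-y|\le 1\}$ I would use Hölder's inequality with an exponent $r'>1$ chosen so that $|x-y|^{-\mu}\in L^{r'}(B_1)$, i.e.\ $\mu r'<N$, which is possible since $\mu<2s<N$; the complementary exponent $r$ then satisfies $r<\tfrac{N}{N-\mu}$, and one checks $2r$ and $qr$ lie in $[2,\2]$ for $r$ close enough to $1$ (here again the hypothesis $q<\tfrac{2(N-\mu)}{N-2s}$ is exactly what makes $qr<\2$ possible). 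Thus $\int_{|x-y|\le1}\tfrac{G(\e x,|u|^2)}{|x-y|^\mu}\,dy \le \||x-y|^{-\mu}\|_{L^{r'}(B_1)}\,\|G(\e x,|u|^2)\|_{L^r(\R^N)} \le C\bigl(\|u\|_{L^{2r}}^2+\|u\|_{L^{qr}}^q\bigr)\le C(\|u\|_\e^2+\|u\|_\e^q)\le C''$ for $u\in\B$, with $C''$ independent of $\e$ and $\ell$.

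Combining the two pieces gives $\sup_{u\in\B}\|\tilde K_\e(u)\|_{L^\infty(\R^N)}\le M$ for some $M=M(N,s,\mu,q,\kappa)$ independent of $\e$ and $\ell$. It then suffices to fix any $\ell_0>2M$ to obtain $\sup_{u\in\B}\|\tilde K_\e(u)\|_{L^\infty}/\ell_0 < 1/2$ for every $\e>0$, as claimed. I expect the only delicate point to be the bookkeeping in the near-diagonal estimate: one must verify that the Hölder exponent $r$ can be chosen strictly bigger than $1$ while simultaneously keeping $\mu r'<N$ and $qr<\2$, and it is precisely the strict inequality $q<\tfrac{2(N-\mu)}{N-2s}$ (equivalently $q<\tfrac{\2}{2}(2-\tfrac\mu N)$, the assumption $(f_2)$ sharpened in the statement) that leaves room to do so; the rest is a routine application of Theorem~\ref{Sembedding} together with the observation that the truncation of $f$ only decreases $G$, so no constant depends on $\ell$ or $a$.
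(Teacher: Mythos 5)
Your strategy is exactly the paper's: bound $G(\e x,t)\le F(t)\le C(t+t^{q/2})$ (uniformly in $\e$ and in $\ell$, since $\tilde f\le f$), split the convolution at $|x-y|=1$, estimate the far region by the $L^{2}$ and $L^{q}$ norms via Theorem \ref{Sembedding} and $\|u\|_{\e}^{2}\le 4(\kappa+1)$, estimate the near region by H\"older against an integrable power of the kernel, and finally choose $\ell_{0}$ larger than twice the resulting constant. The one point to fix is the bookkeeping in the near-diagonal step, and it is not merely delicate but stated backwards: if $\||x-y|^{-\mu}\|_{L^{r'}(B_{1})}<\infty$, i.e.\ $\mu r'<N$, then the conjugate exponent satisfies $r>\frac{N}{N-\mu}$, not $r<\frac{N}{N-\mu}$; taking $r$ close to $1$ forces $r'\to\infty$ and destroys the integrability of the kernel, so that choice would fail. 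The admissible window is $\frac{N}{N-\mu}<r\le \frac{2N}{q(N-2s)}$ (which also gives $2r\le \2$ since $q>2$), and its nonemptiness is precisely the hypothesis $q<\frac{2(N-\mu)}{N-2s}$ — this is the condition you correctly identify in your closing paragraph, so the slip is local and the argument goes through once the direction of the conjugacy is corrected. The paper does the same computation with two separate exponents, $t\in(\frac{N}{N-\mu},\frac{N}{N-2s}]$ for the $|u|^{2}$ term and $r\in(\frac{N}{N-\mu},\frac{2N}{q(N-2s)}]$ for the $|u|^{q}$ term; your use of a single exponent for both is fine provided it is taken in the window above.
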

\begin{proof}
Let us prove that there exists $C_{0}>0$ such that 
\begin{equation}\label{a6}
\sup_{u\in \B} \|\tilde{K}_{\e}(u)(x)\|_{L^{\infty}(\R^{N})}\leq C_{0}.
\end{equation}
First of all, we can observe that
\begin{equation}\label{a5}
|G(\e x, |u|^{2})|\leq |F(|u|^{2})|\leq C(|u|^{2}+|u|^{q}) \mbox{ for all } \e>0.
\end{equation}
Hence, by using \eqref{a5}, we can see that
\begin{align}\label{a7}
|\tilde{K}_{\e}(u)(x)|
&\leq \Bigl| \int_{|x-y|\leq 1} \frac{F(|u|^{2})}{|x-y|^{\mu}} \,dy\Bigr|+\Bigl| \int_{|x-y|>1} \frac{F(|u|^{2})}{|x-y|^{\mu}} \,dy\Bigr| \nonumber\\
&\leq C \int_{|x-y|\leq 1} \frac{|u(y)|^{2}+|u(y)|^{q}}{|x-y|^{\mu}}\, dy+C \int_{\R^{N}} (|u|^{2}+|u|^{q})\, dy \nonumber\\
&\leq C \int_{|x-y|\leq 1} \frac{|u(y)|^{2}+|u(y)|^{q}}{|x-y|^{\mu}}\, dy+C
\end{align}
where in the last line we used Theorem \ref{Sembedding} and $\|u\|^{2}_{\e}\leq 4(\kappa+1)$.\\
Now, we take 
$$
t\in \Bigl(\frac{N}{N-\mu}, \frac{N}{N-2s}\Bigr] \mbox{ and } r\in \Bigl(\frac{N}{N-\mu}, \frac{2N}{q(N-2s)}\Bigr].
$$
By applying H\"older inequality, Theorem \ref{Sembedding} and $\|u\|^{2}_{\e}\leq 4(\kappa+1)$ we get 
\begin{align}\label{a8}
\int_{|x-y|\leq 1} \frac{|u(y)|^{2}}{|x-y|^{\mu}}\, dy&\leq \Bigl(\int_{|x-y|\leq 1} |u|^{2t}\, dy  \Bigr)^{\frac{1}{t}} \Bigl(\int_{|x-y|\leq 1} \frac{1}{|x-y|^{\frac{t\mu}{t-1}}}\, dy \Bigr)^{\frac{t-1}{t}}\nonumber \\
&\leq C_{*}(4(\kappa+1))^{2} \Bigl(\int_{\rho\leq 1} \rho^{N-1-\frac{t \mu}{t-1}}\, d\rho  \Bigr)^{\frac{t-1}{t}}<\infty,
\end{align}
because of $N-1-\frac{t \mu}{t-1}>-1$.
In similar fashion we can prove 
\begin{align}\label{a9}
\int_{|x-y|\leq 1} \frac{|u(y)|^{q}}{|x-y|^{\mu}}\, dy&\leq \Bigl(\int_{|x-y|\leq 1} |u|^{rq}\, dy  \Bigr)^{\frac{1}{r}} \Bigl(\int_{|x-y|\leq 1} \frac{1}{|x-y|^{\frac{r\mu}{r-1}}}\, dy \Bigr)^{\frac{r-1}{r}}\nonumber \\
&\leq C_{*}(4(\kappa+1))^{q} \Bigl(\int_{\rho\leq 1} \rho^{N-1-\frac{r \mu}{r-1}}\, d\rho  \Bigr)^{\frac{r-1}{r}}<\infty
\end{align}
in view of $N-1-\frac{r \mu}{r-1}>-1$.
Putting together \eqref{a8} and \eqref{a9} we obtain
$$
\int_{|x-y|\leq 1} \frac{|u(y)|^{2}+|u(y)|^{q}}{|x-y|^{\mu}}\, dy\leq C \mbox{ for all } x\in \R^{N}
$$
which together with \eqref{a7} implies \eqref{a6}.
Then we can find $\ell_{0}>0$ such that
$$
\frac{\sup_{u\in \B} \|\tilde{K}_{\e}(u)(x)\|_{L^{\infty}(\R^{N})}}{\ell_{0}}\leq \frac{C_{0}}{\ell_{0}}< \frac{1}{2}.
$$
\end{proof}

\noindent
Let $\ell_{0}$ be as in Lemma \ref{lemK} and $a>0$ be the unique number such that 
$$
f(a)=\frac{V_{0}}{\ell_{0}}.
$$
From now on we consider the penalized problem \eqref{Pe} with these choices.

\noindent
In what follows, we show that $J_{\e}$ verifies a local compactness condition.
\begin{lemma}\label{PSc}
 $J_{\e}$ satisfies the $(PS)_{c}$ condition for all $c\in [c_{\e}, \kappa]$.
\end{lemma}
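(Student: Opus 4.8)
The plan is to run the standard compactness scheme for penalized functionals, paying attention to the magnetic Gagliardo seminorm and to the nonlocal Choquard term. Let $(u_n)\subset\h$ satisfy $J_\e(u_n)\to c\in[c_\e,\kappa]$ and $J_\e'(u_n)\to0$ in $(\h)'$. I would first show that $(u_n)$ is bounded in $\h$. A direct computation, starting from $J_\e'(u)[v]=\langle u,v\rangle_\e-\Re\int_{\R^N}\tilde{K}_\e(u)\,g(\e x,|u|^2)u\bar{v}\,dx$ (with $\langle\cdot,\cdot\rangle_\e$ the scalar product of $\h$), gives
$$
J_\e(u_n)-\frac{1}{4}J_\e'(u_n)[u_n]=\frac{1}{4}\|u_n\|_\e^2+\frac{1}{4}\int_{\R^N}\left(\frac{1}{|x|^\mu}*G(\e x,|u_n|^2)\right)\left(g(\e x,|u_n|^2)|u_n|^2-G(\e x,|u_n|^2)\right)dx,
$$
and since $G\geq0$ (so the convolution is nonnegative) and $G(x,t)\leq g(x,t)t$ for every $x$ and $t>0$ by $(g_3)$, the last integral is nonnegative; hence $\frac{1}{4}\|u_n\|_\e^2\leq J_\e(u_n)-\frac{1}{4}J_\e'(u_n)[u_n]$, which first gives boundedness and then, once this is known so that $J_\e'(u_n)[u_n]\to0$, yields $\limsup_n\|u_n\|_\e^2\leq 4c\leq 4\kappa<4(\kappa+1)$. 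Thus $u_n\in\B$ for $n$ large, so Lemma \ref{lemK} applies to $u_n$ and $\|\tilde{K}_\e(u_n)\|_{L^\infty(\R^N)}\leq C_0<\ell_0/2$ for $n$ large. Passing to a subsequence, $u_n\rightharpoonup u$ in $\h$, $u_n\to u$ in $L^r_{\mathrm{loc}}(\R^N,\C)$ for $r\in[1,\2)$, and $u_n\to u$ a.e. in $\R^N$, by Theorem \ref{Sembedding}.

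The core of the argument is a tail estimate that is uniform in $n$, and this is where the penalization is exploited. For $R>0$ pick $\eta_R\in C^\infty(\R^N)$ with $\eta_R\equiv0$ on $B_R$, $\eta_R\equiv1$ on $\R^N\setminus B_{2R}$, $0\leq\eta_R\leq1$ and $|\nabla\eta_R|\leq C/R$, and test $J_\e'(u_n)$ with $\eta_R u_n$. Besides the ``good'' localized piece $\iint_{\R^{2N}\setminus(B_R\times B_R)}\frac{|u_n(x)-u_n(y)e^{\imath A_\e(\frac{x+y}{2})\cdot(x-y)}|^2}{|x-y|^{N+2s}}\,dx\,dy$, the magnetic Gagliardo seminorm produces a commutator-type remainder involving $\eta_R(x)-\eta_R(y)$, which is $O(1/R)$ uniformly in $n$ by the boundedness of $(u_n)$ and $|\nabla\eta_R|\leq C/R$. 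Taking $R$ so large that $\R^N\setminus B_R\subset\R^N\setminus\Lambda_\e$, property $(g_3)(ii)$ gives $g(\e x,|u_n|^2)|u_n|^2\leq\frac{V_0}{\ell_0}|u_n|^2$ on $\supp\eta_R$, and since $V_0\leq V_\e$, $\|\tilde{K}_\e(u_n)\|_{L^\infty(\R^N)}\leq C_0$ and $C_0/\ell_0<1/2$, the resulting nonlinear term is absorbed into $\frac{1}{2}\int_{\R^N}\eta_R V_\e|u_n|^2\,dx$. Hence $\iint_{\R^{2N}\setminus(B_R\times B_R)}\frac{|u_n(x)-u_n(y)e^{\imath A_\e(\frac{x+y}{2})\cdot(x-y)}|^2}{|x-y|^{N+2s}}\,dx\,dy+\int_{\R^N\setminus B_{2R}}V_\e|u_n|^2\,dx$ is as small as we please, uniformly in $n$, for $R$ large; the same holds with $u$ in place of $u_n$ by weak lower semicontinuity. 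Together with the local strong convergence above, this upgrades $u_n\to u$ to strong convergence in $L^r(\R^N,\C)$ for every $r\in[2,\2)$.

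I would then pass to the limit in the Choquard nonlinearity. Using $(g_1)$, $(g_2)$, the bound $|G(\e x,|u_n|^2)|\leq C(|u_n|^2+|u_n|^q)$ with $tq\in(2,\2)$ for $t=\frac{2N}{2N-\mu}$, the Hardy--Littlewood--Sobolev inequality (Theorem \ref{HLS}), the global $L^r$ convergence, the pointwise convergence, and the uniform $L^\infty$ bound on $\tilde{K}_\e(u_n)$ from Lemma \ref{lemK}, a Vitali-type argument shows that both
$$
\int_{\R^N}\tilde{K}_\e(u_n)\,g(\e x,|u_n|^2)|u_n|^2\,dx\qquad\text{and}\qquad\Re\int_{\R^N}\tilde{K}_\e(u_n)\,g(\e x,|u_n|^2)u_n\bar{u}\,dx
$$
converge to $\int_{\R^N}\tilde{K}_\e(u)\,g(\e x,|u|^2)|u|^2\,dx$; in particular $u$ is a weak solution of \eqref{Pe}. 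Since $|J_\e'(u_n)[u]|\leq\|J_\e'(u_n)\|_{(\h)'}\|u\|_\e\to0$ and $\langle u_n,u\rangle_\e\to\|u\|_\e^2$, we obtain $\|u\|_\e^2=\int_{\R^N}\tilde{K}_\e(u)\,g(\e x,|u|^2)|u|^2\,dx$; and from $J_\e'(u_n)[u_n]=o(1)$ we get $\|u_n\|_\e^2=\int_{\R^N}\tilde{K}_\e(u_n)\,g(\e x,|u_n|^2)|u_n|^2\,dx+o(1)\to\|u\|_\e^2$. Since $\h$ is a Hilbert space and $u_n\rightharpoonup u$, the norm convergence $\|u_n\|_\e\to\|u\|_\e$ forces $u_n\to u$ strongly in $\h$.

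The main obstacle is the passage to the limit in the nonlocal Choquard term over all of $\R^N$: the nonlocality prevents a purely local compactness argument, and it is exactly the interplay between the penalization --- which, via $(g_3)(ii)$, provides the uniform tail estimate, hence global $L^r$ convergence --- and the uniform $L^\infty$ bound on $\tilde{K}_\e$ of Lemma \ref{lemK} that makes the limit go through. A secondary technical nuisance, with no analogue in the non-magnetic setting, is the commutator remainder generated by the cutoff $\eta_R$ in the tail estimate, which has to be shown to vanish as $R\to\infty$ uniformly in $n$.
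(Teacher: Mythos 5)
Your proposal is correct and follows essentially the same scheme as the paper: boundedness via $J_{\e}(u_{n})-\frac{1}{4}\langle J_{\e}'(u_{n}),u_{n}\rangle$ together with $(g_3)$, a uniform tail estimate obtained by testing with $\eta_{R}u_{n}$, absorbing the penalized nonlinearity through $(g_3)(ii)$ and the $L^{\infty}$ bound on $\tilde{K}_{\e}(u_{n})$ from Lemma \ref{lemK} while the commutator term involving $\eta_{R}(x)-\eta_{R}(y)$ vanishes as $R\rightarrow\infty$ uniformly in $n$, and finally strong convergence in the Hilbert space $\h$ from weak convergence plus norm convergence. The only (harmless) deviation is in the last limit passage: the paper treats $\int_{\R^{N}}\tilde{K}_{\e}(u_{n})g(\e x,|u_{n}|^{2})(|u_{n}|^{2}-|u|^{2})\,dx$ directly by splitting into $B_{R}$ and its complement using only local compactness and the tail estimate, whereas you first upgrade to global $L^{r}(\R^{N})$ convergence and then invoke Hardy--Littlewood--Sobolev and continuity of the Nemytskii maps; both are valid.
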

\begin{proof}
Let $(u_{n})$ be a Palais-Smale sequence at the level $c$, that is $J_{\e}(u_{n})\rightarrow c$ and $J_{\e}'(u_{n})\rightarrow 0$. Let us note that $(u_{n})$ is bounded and there exists $n_{0}\in \N$ such that $\|u_{n}\|^{2}_{\e}\leq 4(\kappa+1)$ for all $n\geq n_{0}$. Indeed, by using $(g_3)$ and Lemma \ref{lemK}, we can see that
\begin{align*}
c+o_{n}(1)\|u_{n}\|_{\e}\geq J_{\e}(u_{n})-\frac{1}{4}\langle J'_{\e}(u_{n}), u_{n}\rangle\geq \frac{1}{4} \|u_{n}\|^{2}_{\e}
\end{align*}
which implies the thesis.\\
Now, we divide the proof in two main steps.\\
{\bf Step $1$}: For any $\eta>0$ there exists $R=R_{\eta}>0$ such that 
\begin{equation}\label{DF}
\limsup_{n\rightarrow \infty}\int_{\R^{N}\setminus B_{R}} \int_{\R^{N}} \frac{|u_{n}(x)-u_{n}(y)e^{\imath A_{\e}(\frac{x+y}{2})\cdot (x-y)}|^{2}}{|x-y|^{N+2s}}dx dy+\int_{\R^{N}\setminus B_{R}}V(\e x)|u_{n}|^{2}\, dx<\eta.
\end{equation}
Since $(u_{n})$ is bounded in $\h$, we may assume that $u_{n}\rightharpoonup u$ in $\h$ and $|u_{n}|\rightarrow |u|$ in $L^{r}_{loc}(\R^{N})$ for any $r\in [2, 2^{*}_{s})$. 
Moreover, by Lemma \ref{lemK}, we can deduce that
\begin{equation}\label{Kbound}
\frac{\sup_{n\geq n_{0}}\|\tilde{K}_{\e}(u_{n})(x)\|_{L^{\infty}(\R^{N})}}{\ell_{0}}\leq \frac{1}{2}.
\end{equation}
Fix $R>0$ and let $\psi_{R}\in C^{\infty}(\R^{N})$ be a function such that $\psi_{R}=0$ in $B_{R/2}$, $\psi_{R}=1$ in $B_{R}^{c}$, $\psi_{R}\in [0, 1]$ and $|\nabla \eta_{R}|\leq C/R$.
Since $\langle J'_{\e}(u_{n}), \eta_{R}u_{n}\rangle =o_{n}(1)$ we have
\begin{align*}
&\Re\Bigl(\iint_{\R^{2N}} \!\!\!\frac{(u_{n}(x)\!-\!u_{n}(y)e^{\imath A_{\e}(\frac{x+y}{2})\cdot (x-y)})\overline{((u_{n}\eta_{R})(x)\!-\!(u_{n}\eta_{R})(y)e^{\imath A_{\e}(\frac{x+y}{2})\cdot (x-y)})}}{|x-y|^{N+2s}} dx dy \Bigr)\\
&+\int_{\R^{N}} V_{\e}(x)\eta_{R} |u_{n}|^{2}\, dx=\int_{\R^{N}} \Bigl(\frac{1}{|x|^{\mu}}*G(\e x, |u_{n}|^{2})\Bigr) g(\e x, |u_{n}|^{2})u_{n}\psi_{R}+o_{n}(1).
\end{align*}
Taking into account 
\begin{align*}
&\Re\Bigl(\iint_{\R^{2N}}\!\!\! \frac{(u_{n}(x)\!-\!u_{n}(y)e^{\imath A_{\e}(\frac{x+y}{2})\cdot (x-y)})\overline{((u_{n}\eta_{R})(x)\!-\!(u_{n}\eta_{R})(y)e^{\imath A_{\e}(\frac{x+y}{2})\cdot (x-y)})}}{|x-y|^{N+2s}} dx dy \Bigr)\\
&=\Re\Bigl(\iint_{\R^{2N}} \!\!\!\overline{u_{n}(y)}e^{-\imath A_{\e}(\frac{x+y}{2})\cdot (x-y)}\frac{(u_{n}(x)\!-\!u_{n}(y)e^{\imath A_{\e}(\frac{x+y}{2})\cdot (x-y)})(\eta_{R}(x)\!-\!\eta_{R}(y))}{|x-y|^{N+2s}}  dx dy\Bigr)\\
&+\iint_{\R^{2N}} \!\!\!\eta_{R}(x)\frac{|u_{n}(x)\!-\!u_{n}(y)e^{\imath A_{\e}(\frac{x+y}{2})\cdot (x-y)}|^{2}}{|x-y|^{N+2s}} dx dy,
\end{align*}
and choosing $R>0$ large enough such that $\Lambda_{\e}\subset B_{\frac{R}{2}}$, we can use $(g_3)$-$(ii)$ and \eqref{Kbound} to get
\begin{align}\label{PS1}
&\iint_{\R^{2N}} \eta_{R}(x)\frac{|u_{n}(x)-u_{n}(y)e^{\imath A_{\e}(\frac{x+y}{2})\cdot (x-y)}|^{2}}{|x-y|^{N+2s}}\, dx dy+\int_{\R^{N}} V_{\e}(x)\eta_{R} |u_{n}|^{2}\, dx\nonumber\\
&\leq -\Re\Bigl(\iint_{\R^{2N}} \!\!\!\overline{u_{n}(y)}e^{-\imath A_{\e}(\frac{x+y}{2})\cdot (x-y)}\frac{(u_{n}(x)\!-\!u_{n}(y)e^{\imath A_{\e}(\frac{x+y}{2})\cdot (x-y)})(\eta_{R}(x)\!-\!\eta_{R}(y))}{|x-y|^{N+2s}}  dx dy\Bigr) \nonumber\\
&+\frac{1}{2}\int_{\R^{N}} V_{\e}(x) |u_{n}|^{2} \eta_{R} \, dx+o_{n}(1).
\end{align}
From the H\"older inequality and the boundedness of $(u_{n})$ in $\h$ it follows that
\begin{align}\label{PS2}
&\Bigl|\Re\Bigl(\iint_{\R^{2N}} \!\!\!\overline{u_{n}(y)}e^{-\imath A_{\e}(\frac{x+y}{2})\cdot (x-y)}\frac{(u_{n}(x)\!-\!u_{n}(y)e^{\imath A_{\e}(\frac{x+y}{2})\cdot (x-y)})(\eta_{R}(x)\!-\!\eta_{R}(y))}{|x-y|^{N+2s}}  dx dy\Bigr)\Bigr| \nonumber\\
&\leq \Bigl(\iint_{\R^{2N}} \frac{|u_{n}(x)-u_{n}(y)e^{\imath A_{\e}(\frac{x+y}{2})\cdot (x-y)}|^{2}}{|x-y|^{N+2s}}dxdy  \Bigr)^{\frac{1}{2}} \times \nonumber\\
&\quad \times \Bigl(\iint_{\R^{2N}} |\overline{u_{n}(y)}|^{2}\frac{|\eta_{R}(x)-\eta_{R}(y)|^{2}}{|x-y|^{N+2s}}  dxdy\Bigr)^{\frac{1}{2}} \nonumber\\
&\leq C \Bigl(\iint_{\R^{2N}} |u_{n}(y)|^{2}\frac{|\eta_{R}(x)-\eta_{R}(y)|^{2}}{|x-y|^{N+2s}} \, dxdy\Bigr)^{\frac{1}{2}}.
\end{align}
By using Lemma 2.1 in \cite{A6} we can see that
\begin{equation}\label{PS3}
\lim_{R\rightarrow \infty}\limsup_{n\rightarrow \infty} \iint_{\R^{2N}} |u_{n}(y)|^{2}\frac{|\eta_{R}(x)-\eta_{R}(y)|^{2}}{|x-y|^{N+2s}} \, dxdy=0.
\end{equation}
Then, putting together \eqref{PS1}, \eqref{PS2} and \eqref{PS3} we can deduce that \eqref{DF} holds true.

\noindent
{\bf Step $2$}: Let us prove that $u_{n}\rightarrow u$ in $H^{s}_{\e}$ as $n\rightarrow \infty$.\\
Since $u_{n}\rightharpoonup u$ in $\h$ and $\langle J'_{\e}(u_{n}),u_{n}\rangle=\langle J'_{\e}(u_{n}),u\rangle=o_{n}(1)$
we can note that
$$
\|u_{n}\|^{2}_{\e}-\|u\|^{2}_{\e}=\|u_{n}-u\|^{2}_{\e}+o_{n}(1)=\int_{\R^{N}} \tilde{K}_{\e}(u_{n})g_{\e}(x, |u_{n}|^{2})(|u_{n}|^{2}-|u|^{2})dx+o_{n}(1).
$$
Therefore, being $\h$ be a Hilbert space, it is enough to show that 
$$
\int_{\R^{N}} \tilde{K}_{\e}(u_{n})g_{\e}(x, |u_{n}|^{2})(|u_{n}|^{2}-|u|^{2})dx=o_{n}(1).
$$
By Lemma \ref{lemK} we know that $|\tilde{K}_{\e}(u_{n})|\leq C$ for all $n\in \N$. Since $|u_{n}|\rightarrow |u|$ in $L^{r}(B_{R})$ for all $r\in [2, 2^{*}_{s})$ and $R>0$, we obtain 
\begin{align}
\left|\int_{B_{R}} \tilde{K}_{\e}(u_{n})g_{\e}(x, |u_{n}|^{2})(|u_{n}|^{2}-|u|^{2})dx\right|\leq C\int_{B_{R}} |g_{\e}(x, |u_{n}|^{2})(|u_{n}|^{2}-|u|^{2})|dx\rightarrow 0.
\end{align}
By the Step $1$ and Theorem \ref{Sembedding}, for any $\eta>0$ there exists $R_{\eta}>0$ such that
$$
\limsup_{n\rightarrow \infty} \int_{\R^{N}\setminus B_{R}} \tilde{K}_{\e}(u_{n}) |g(\e x, |u_{n}|^{2})|u_{n}|^{2}| \, dx\leq C\eta. 
$$
In similar way, from H\"older inequality, we can see that
$$
\limsup_{n\rightarrow \infty} \int_{\R^{N}\setminus B_{R}} \tilde{K}_{\e}(u_{n}) |g(\e x, |u_{n}|^{2})|u|^{2}| \, dx\leq C\eta. 
$$
Taking into account the above limits we can infer that  
$$
\lim_{n\rightarrow \infty} \int_{\R^{N}} \tilde{K}_{\e}(u_{n}) g(\e x, |u_{n}|^{2})(|u_{n}|^{2}-|u|^{2}) \, dx=0. 
$$
This ends the proof of Lemma \ref{PSc}.
\end{proof}

\section{Concentration of solutions to \eqref{P}}
In this section we give the proof of the main result of this paper. 
Firstly, we consider the limit problem associated with \eqref{R}, that is
\begin{equation}\label{APe}
(-\Delta)^{s} u + V_{0}u = \left(\frac{1}{|x|^{\mu}}*F(|u|^{2})\right)f(|u|^{2})u \mbox{ in } \R^{N}, 
\end{equation}
and the corresponding energy functional $J_{0}: H^{s}_{0}\rightarrow \R$ given by
$$
J_{0}(u)=\frac{1}{2}\|u\|^{2}_{V_{0}}-\mathfrak{F}(u), 
$$
where $H_{0}^{s}$ is the space $H^{s}(\R^{N}, \R)$ endowed with the norm 
$$
\|u\|^{2}_{V_{0}}=[u]^{2}+\int_{\R^{N}} V_{0} u^{2}\,dx,
$$
and
$$
\mathfrak{F}(u)=\frac{1}{4}\int_{\R^{N}} \left(\frac{1}{|x|^{\mu}}*F(|u|^{2})\right)F(|u|^{2})\, dx.
$$
As in the previous section, it is easy to see that $J_{0}$ has a mountain pass geometry and we denote by $c_{V_{0}}$ the mountain pass level of the functional $J_{0}$.

Let us introduce the Nehari manifold associated with (\ref{Pe}), that is
\begin{equation*}
\mathcal{N}_{\e}:= \{u\in \h \setminus \{0\} : \langle J_{\e}'(u), u \rangle =0\},
\end{equation*}
and we denote by $\mathcal{N}_{0}$ the Nehari manifold associated with \eqref{APe}.
It is standard to verify (see \cite{W}) that $c_{\e}$ can be characterized as 
$$
c_{\e}=\inf_{u\in \h\setminus\{0\}} \sup_{t\geq 0} J_{\e}(t u)=\inf_{u\in \mathcal{N}_{\e}} J_{\e}(u).
$$
In the next result we stress an interesting relation between $c_{\e}$ and $c_{V_{0}}$.
\begin{lemma}\label{AMlem1}
The numbers $c_{\e}$ and $c_{V_{0}}$ satisfy the following inequality
$$
\limsup_{\e\rightarrow 0} c_{\e}\leq c_{V_{0}}.
$$
\end{lemma}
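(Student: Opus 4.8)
The strategy is the standard one for penalization arguments of this type: produce a well-chosen test function for the mountain pass level $c_\e$ by transplanting a ground state of the limit problem \eqref{APe} near the origin (where $V(0)=V_0$), and exploit the fact that near $x=0$ the penalized nonlinearity $g(\e x,\cdot)$ coincides with $f$, since $0\in\Lambda$ and $\supp$ of the test function, once rescaled, sits inside $\Lambda_\e$. The diamagnetic machinery (Lemma \ref{DI}) and the H\"older continuity of $A$ enter precisely to control the magnetic Gagliardo seminorm of this test function by the ordinary one, up to an error that vanishes as $\e\to0$.

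First I would fix a ground state $w\in H^s(\R^N,\R)$ for $J_0$, i.e. $w\in\mathcal N_0$ with $J_0(w)=c_{V_0}$. Since $C^\infty_c(\R^N,\R)$ is dense and the functional is continuous, for any $\delta>0$ I can replace $w$ by a compactly supported $w_\delta$ with $J_0(w_\delta)\le c_{V_0}+\delta$ and, after a small dilation, arrange $\supp(w_\delta)\subset B_r$ with $\e B_r\subset\Lambda$ for all small $\e$; equivalently $\supp(w_\delta)\subset\Lambda_\e$. By Lemma \ref{aux}, $v_\e:=e^{\imath A(0)\cdot x}w_\delta\in\h$. The key pointwise computation is that
$$
v_\e(x)-v_\e(y)e^{\imath A_\e(\frac{x+y}{2})\cdot(x-y)}=e^{\imath A(0)\cdot x}\Bigl(w_\delta(x)-w_\delta(y)e^{\imath(A(\e\frac{x+y}{2})-A(0))\cdot(x-y)}\Bigr),
$$
so that $[v_\e]_{A_\e}^2$ differs from $[w_\delta]^2$ by a term controlled, via the H\"older bound $|A(\e z)-A(0)|\le C\e^\alpha|z|^\alpha$ and the elementary inequality $|e^{\imath\theta}-1|\le|\theta|$, by an expression that tends to $0$ as $\e\to0$ on the bounded support of $w_\delta$ (one splits the double integral into $|x-y|\le1$ and $|x-y|>1$, exactly as in the proof of Lemma \ref{lemK}). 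Likewise $\int V_\e|v_\e|^2=\int V(\e x)w_\delta^2\to\int V_0 w_\delta^2$ by dominated convergence and $(V_2)$. Hence $\|v_\e\|_\e^2\to\|w_\delta\|_{V_0}^2$. Since $\supp(w_\delta)\subset\Lambda_\e$ we have $G(\e x,|v_\e|^2)=F(|w_\delta|^2)$ pointwise, so the convolution term in $J_\e(v_\e)$ equals $4\mathfrak F(w_\delta)$ exactly. Therefore $J_\e(tv_\e)\to J_0(tw_\delta)$ uniformly for $t$ in compact subsets of $[0,\infty)$, and because $t\mapsto J_0(tw_\delta)$ attains its max at some finite $t_\delta$ (mountain pass geometry plus $(g_4)$/$(f_3)$ monotonicity forcing a unique maximum), one gets
$$
\limsup_{\e\to0}c_\e\le\limsup_{\e\to0}\max_{t\ge0}J_\e(tv_\e)=\max_{t\ge0}J_0(tw_\delta)\le J_0(w_\delta)\le c_{V_0}+\delta.
$$
Letting $\delta\to0$ gives the claim.

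The main technical obstacle is the magnetic seminorm estimate: showing $[e^{\imath A(0)\cdot x}w_\delta]_{A_\e}\to[w_\delta]$ as $\e\to0$. This is where the $C^{0,\alpha}$ hypothesis on $A$ and the compact support of $w_\delta$ are both essential — one needs the phase defect $(A(\e\tfrac{x+y}{2})-A(0))\cdot(x-y)$ to be small in an $L^2$-with-weight sense against the Gagliardo kernel, and the splitting near and far from the diagonal must be done carefully so that the far part is handled by $\|w_\delta\|_{L^2}$ (finite, compact support) and the near part by $[w_\delta]^2$ times $\sup|A(\e z)-A(0)|^2$ over the relevant bounded set, which is $O(\e^{2\alpha})$. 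A secondary point to be careful about is the uniform-in-$t$ convergence of the maxima: one must rule out the maximizing $t_\e$ escaping to $\infty$ or collapsing to $0$, which follows from the uniform coercivity/geometry estimates already established in Lemma \ref{MPG} together with the fact that $\|v_\e\|_\e$ stays bounded and bounded away from $0$.
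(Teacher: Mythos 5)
Your overall strategy --- transplant a ground state of the limit problem, multiply by the phase $e^{\imath A(0)\cdot x}$, use the H\"older continuity of $A$ to show the magnetic seminorm converges to the ordinary Gagliardo seminorm, and exploit that $g(\e x,\cdot)=f$ on the support of the test function --- is the same mechanism as the paper's, and most of it is sound. However, your final chain of inequalities contains a genuine error: you write $\max_{t\ge0}J_0(t w_\delta)\le J_0(w_\delta)$, and this inequality goes the wrong way. By the minimax characterization $c_{V_0}=\inf_{u\neq0}\max_{t\ge0}J_0(tu)$ one always has $\max_{t\ge0}J_0(tw_\delta)\ge c_{V_0}$, and $\max_{t\ge0}J_0(tw_\delta)\ge J_0(w_\delta)$, with equality only when the fibering map of $w_\delta$ is maximized at $t=1$, i.e. only when $w_\delta\in\mathcal{N}_{0}$ --- which a compactly supported truncation of $w$ will generally not be. So choosing $w_\delta$ with $J_0(w_\delta)\le c_{V_0}+\delta$ is not enough: the quantity you must make close to $c_{V_0}$ is the fibering maximum, not the value at $t=1$. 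The repair is to show that, since $w_\delta\to w$ strongly in $H^{s}(\R^{N},\R)$ and $w\in\mathcal{N}_{0}$, the maximizers $t_\delta$ of $t\mapsto J_0(tw_\delta)$ remain in a compact subset of $(0,\infty)$ and converge to $1$ (using $(f_1)$, $(f_3)$ and the positivity of $w$ on a small ball), whence $\max_{t\ge 0}J_0(tw_\delta)\rightarrow c_{V_0}$ as $\delta\rightarrow 0$. This is precisely the ``$t_{\e}\to1$'' argument the paper carries out for its own test functions; your truncation does not avoid it, it only relocates it to the parameter $\delta$, and as written that step is missing.

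Two smaller points. First, in the seminorm estimate the fixed splitting at $|x-y|=1$ ``as in Lemma \ref{lemK}'' does not by itself make the far region small: bounding $|e^{\imath\theta}-1|$ by $2$ there only yields a term of size $C\|w_\delta\|_{L^{2}(\R^{N})}^{2}$, not $o(1)$; you need either dominated convergence on $\{|x-y|>1\}$ (the integrand is dominated by $4w_\delta^{2}(y)|x-y|^{-N-2s}$ and tends to $0$ pointwise since $A$ is continuous at $0$) or an $\e$-dependent splitting radius $\e^{-\beta}$ with $0<\beta<\alpha/(1+\alpha-s)$, which is what the paper does. Second, for comparison: the paper takes $w_{\e}=\eta(\e x)w(x)e^{\imath A(0)\cdot x}$, a cutoff at scale $1/\e$, so no truncation error survives the limit, but it must then prove the decay $w(x)\le C|x|^{-(N+2s)}$ to control the weighted integral $\int_{\R^{N}}|y|^{2\alpha}w^{2}(y)\,dy$; your fixed compact truncation would bypass that decay estimate, which is a genuine simplification --- but only once the continuity of the fibering maximum at the $\delta$-level is argued correctly as above.
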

\begin{proof}
In view of Lemma $3.3$ in \cite{Apota}, there exists a ground state $w\in H^{s}(\R^{N}, \R)$ to the autonomous problem \eqref{APe}, so that $J'_{0}(w)=0$ and $J_{0}(w)=c_{V_{0}}$. Moreover, we know that $w\in C^{0, \mu}(\R^{N})$ and $w>0$ in $\R^{N}$.  
In what follows, we show that $w$ satisfies the following useful estimate:
\begin{equation}\label{remdecay}
0<w(x)\leq \frac{C}{|x|^{N+2s}} \mbox{ for large } |x|.
\end{equation}
By using $(f_1)$, $\lim_{|x|\rightarrow\infty}w(x)=0$ and the boundedness of the convolution term (see proof of Lemma \ref{lemK}) we can find $R>0$ such that $\left(\frac{1}{|x|^{\mu}}*F(w^{2})\right)f(w^{2})w\leq \frac{V_{0}}{2}w$ in $B_{R}^{c}$. In particular we have
\begin{equation}\label{BBMP1}
(-\Delta)^{s}w+\frac{V_{0}}{2}w=\left(\frac{1}{|x|^{\mu}}*F(w^{2})\right)f(w^{2})w-\left(V_{0}-\frac{V_{0}}{2}\right)w\leq 0 \mbox{ in } B_{R}^{c}.
\end{equation}
In view of Lemma $4.2$ in \cite{FQT} and by rescaling, we know that there exists a positive function $w_{1}$ and a constant $C_{1}>0$ such that for large $|x|>R$ it holds that $w_{1}(x)=C_{1}|x|^{-(N+2s)}$ and 
\begin{equation}\label{BBMP2}
(-\Delta)^{s}w_{1}+\frac{V_{0}}{2}w_{1}\geq 0 \mbox{ in }  B^{c}_{R}.
\end{equation}
Taking into account the continuity of $w$ and $w_{1}$ there exists $C_{2}>0$ such that $w_{2}(x)=w(x)-C_{2}w_{1}(x)\leq 0$ on $|x|=R$ (taking $R$ larger if necessary). Moreover, we can see that $(-\Delta)^{s}w_{2}+\frac{V_{0}}{2}w_{2}\leq 0$ for $|x|\geq R$ and by using the maximum principle we can infer that $w_{2}\leq 0$ in $B_{R}^{c}$, that is $w\leq C_{2}w_{1}$ in $B_{R}^{c}$. This fact implies that \eqref{remdecay} holds true.

Now, fix a cut-off function $\eta\in C^{\infty}_{c}(\R^{N}, [0,1])$ such that $\eta=1$ in a neighborhood of zero $B_{\frac{\delta}{2}}$ and $\supp(\eta)\subset B_{\delta}\subset \Lambda$ for some $\delta>0$. 
Let us define $w_{\e}(x):=\eta_{\e}(x)w(x) e^{\imath A(0)\cdot x}$, with $\eta_{\e}(x)=\eta(\e x)$ for $\e>0$, and we observe that $|w_{\e}|=\eta_{\e}w$ and $w_{\e}\in \h$ in view of Lemma \ref{aux}. Let us prove that
\begin{equation}\label{limwr}
\lim_{\e\rightarrow 0}\|w_{\e}\|^{2}_{\e}=\|w\|_{0}^{2}\in(0, \infty).
\end{equation}
Clearly, $\int_{\R^{N}} V_{\e}(x)|w_{\e}|^{2}dx\rightarrow \int_{\R^{N}} V_{0} |w|^{2}dx$. Now, we show that
\begin{equation}\label{limwr*}
\lim_{\e\rightarrow 0}[w_{\e}]^{2}_{A_{\e}}=[w]^{2}.
\end{equation}
We note that, in view of Lemma $5$ in \cite{PP}, we have 
\begin{equation}\label{PPlem}
[\eta_{\e} w]\rightarrow [w] \mbox{ as } \e\rightarrow 0.
\end{equation}
On the other hand
\begin{align*}
&[w_{\e}]_{A_{\e}}^{2}\nonumber \\
&=\iint_{\R^{2N}} \frac{|e^{\imath A(0)\cdot x}\eta_{\e}(x)w(x)-e^{\imath A_{\e}(\frac{x+y}{2})\cdot (x-y)}e^{\imath A(0)\cdot y} \eta_{\e}(y)w(y)|^{2}}{|x-y|^{N+2s}} dx dy \nonumber \\
&=[\eta_{\e} w]^{2}
+\iint_{\R^{2N}} \frac{\eta_{\e}^2(y)w^2(y) |e^{\imath [A_{\e}(\frac{x+y}{2})-A(0)]\cdot (x-y)}-1|^{2}}{|x-y|^{N+2s}} dx dy\\
&+2\Re \iint_{\R^{2N}} \frac{(\eta_{\e}(x)w(x)-\eta_{\e}(y)w(y))\eta_{\e}(y)w(y)(1-e^{-\imath [A_{\e}(\frac{x+y}{2})-A(0)]\cdot (x-y)})}{|x-y|^{N+2s}} dx dy \\
&=: [\eta_{\e} w]^{2}+X_{\e}+2Y_{\e}.
\end{align*}
Taking into account
$|Y_{\e}|\leq [\eta_{\e} w] \sqrt{X_{\e}}$ and \eqref{PPlem}, we need to prove that $X_{\e}\rightarrow 0$ as $\e\rightarrow 0$ to deduce that \eqref{limwr*} holds true.

Let us observe that for $0<\beta<\alpha/({1+\alpha-s})$ we get
\begin{equation}\label{Ye}
\begin{split}
X_{\e}
&\leq \int_{\R^{N}} w^{2}(y) dy \int_{|x-y|\geq\e^{-\beta}} \frac{|e^{\imath [A_{\e}(\frac{x+y}{2})-A(0)]\cdot (x-y)}-1|^{2}}{|x-y|^{N+2s}} dx\\
&+\int_{\R^{N}} w^{2}(y) dy  \int_{|x-y|<\e^{-\beta}} \frac{|e^{\imath [A_{\e}(\frac{x+y}{2})-A(0)]\cdot (x-y)}-1|^{2}}{|x-y|^{N+2s}} dx \\
&=:X^{1}_{\e}+X^{2}_{\e}.
\end{split}
\end{equation}
Since $|e^{\imath t}-1|^{2}\leq 4$ and $w\in H^{s}(\R^{N}, \R)$, we can see that
\begin{equation}\label{Ye1}
X_{\e}^{1}\leq C \int_{\R^{N}} w^{2}(y) dy \int_{\e^{-\beta}}^\infty \rho^{-1-2s} d\rho\leq C \e^{2\beta s} \rightarrow 0.
\end{equation}
Now, by using $|e^{\imath t}-1|^{2}\leq t^{2}$ for all $t\in \R$, $A\in C^{0,\alpha}(\R^N,\R^N)$ for $\alpha\in(0,1]$, and $|x+y|^{2}\leq 2(|x-y|^{2}+4|y|^{2})$, we obtain
\begin{equation}\label{Ye2}
\begin{split}
X^{2}_{\e}&
	\leq \int_{\R^{N}} w^{2}(y) dy  \int_{|x-y|<\e^{-\beta}} \frac{|A_{\e}\left(\frac{x+y}{2}\right)-A(0)|^{2} }{|x-y|^{N+2s-2}} dx \\
	&\leq C\e^{2\alpha} \int_{\R^{N}} w^{2}(y) dy  \int_{|x-y|<\e^{-\beta}} \frac{|x+y|^{2\alpha} }{|x-y|^{N+2s-2}} dx \\
	&\leq C\e^{2\alpha} \left(\int_{\R^{N}} w^{2}(y) dy  \int_{|x-y|<\e^{-\beta}} \frac{1 }{|x-y|^{N+2s-2-2\alpha}} dx\right.\\
	&\qquad\qquad+ \left. \int_{\R^{N}} |y|^{2\alpha} w^{2}(y) dy  \int_{|x-y|<\e^{-\beta}} \frac{1}{|x-y|^{N+2s-2}} dx\right) \\
	&=: C\e^{2\alpha} (X^{2, 1}_{\e}+X^{2, 2}_{\e}).
	\end{split}
	\end{equation}	
	Then
	\begin{equation}\label{Ye21}
	X^{2, 1}_{\e}
	= C  \int_{\R^{N}} w^{2}(y) dy \int_0^{\e^{-\beta}} \rho^{1+2\alpha-2s} d\rho
	\leq C\e^{-2\beta(1+\alpha-s)}.
	\end{equation}
	On the other hand, using \eqref{remdecay}, we can infer that
	\begin{equation}\label{Ye22}
	\begin{split}
	 X^{2, 2}_{\e}
	 &\leq C  \int_{\R^{N}} |y|^{2\alpha} w^{2}(y) dy \int_0^{\e^{-\beta}}\rho^{1-2s} d\rho  \\
	&\leq C \e^{-2\beta(1-s)} \left[\int_{B_1(0)}  w^{2}(y) dy + \int_{B_1^c(0)} \frac{1}{|y|^{2(N+2s)-2\alpha}} dy \right]  \\
	&\leq C \e^{-2\beta(1-s)}.
	\end{split}
	\end{equation}
	Taking into account \eqref{Ye}, \eqref{Ye1}, \eqref{Ye2}, \eqref{Ye21} and \eqref{Ye22} we have $X_{\e}\rightarrow 0$, and then \eqref{limwr} holds.
	
Now, let $t_{\e}>0$ be the unique number such that 
\begin{equation*}
J_{\e}(t_{\e} w_{\e})=\max_{t\geq 0} J_{\e}(t w_{\e}).
\end{equation*}
As a consequence, $t_{\e}$ satisfies 
\begin{align}\label{AS1}
\|w_{\e}\|_{\e}^{2}&=\int_{\R^{N}} \left(\frac{1}{|x|^{\mu}}*G(\e x, t_{\e}^{2} |w_{\e}|^{2})\right)g(\e x, t_{\e}^{2} |w_{\e}|^{2}) |w_{\e}|^{2}dx \nonumber \\
&=\int_{\R^{N}} \left(\frac{1}{|x|^{\mu}}*F(t_{\e}^{2} |w_{\e}|^{2})\right) f(t_{\e}^{2} |w_{\e}|^{2}) |w_{\e}|^{2}dx
\end{align}
where we used $supp(\eta)\subset \Lambda$ and $g=f$ on $\Lambda$.\\
Let us prove that $t_{\e}\rightarrow 1$ as $\e\rightarrow 0$. Since $\eta=1$ in $B_{\frac{\delta}{2}}$, $w$ is a continuous positive function, and recalling that $f(t)$ and $F(t)/t$ are both increasing, we have 
$$
\|w_{\e}\|_{\e}^{2}\geq \frac{F(t_{\e}^{2}\alpha_{0}^{2})}{\alpha_{0}^{2}} f(t_{\e}^{2}\alpha^{2}_{0})\int_{B_{\frac{\delta}{2}}}\int_{B_{\frac{\delta}{2}}} 
\frac{|w(x)|^{2} |w(y)|^{2}}{|x-y|^{\mu}}dx dy, 
$$
where $\alpha_{0}=\min_{\bar{B}_{\frac{\delta}{2}}} w>0$. \\
Let us prove that $t_{\e}\rightarrow t_{0}\in (0, \infty)$ as $\e\rightarrow 0$. Indeed, if $t_{\e}\rightarrow \infty$ as $\e\rightarrow 0$ then we can use $(f_3)$ to deduce that $\|w\|_{0}^{2}= \infty$ which gives a contradiction due to \eqref{limwr}.
When $t_{\e}\rightarrow 0$ as $\e\rightarrow 0$ we can use $(f_1)$ to infer that $\|w\|_{0}^{2}= 0$ which is impossible in view of \eqref{limwr}.\\
Then, taking the limit as $\e\rightarrow 0$ in \eqref{AS1} and using \eqref{limwr}, we can deduce that 
\begin{equation}\label{AS2}
\|w\|_{0}^{2}=\int_{\R^{N}} \left(\frac{1}{|x|^{\mu}}*F(t_{0}^{2}|w|^{2})\right)f(t_{0}^{2} |w|^{2}) |w|^{2}dx.
\end{equation}
Since $w\in \mathcal{N}_{0}$ and using $(f_3)$, we obtain $t_{0}=1$. Hence, from the Dominated Convergence Theorem, we can see that $\lim_{\e\rightarrow 0} J_{\e}(t_{\e} w_{\e})=J_{0}(w)=c_{V_{0}}$.
Recalling that $c_{\e}\leq \max_{t\geq 0} J_{\e}(t w_{\e})=J_{\e}(t_{\e} w_{\e})$, we can infer  that
$\limsup_{\e\rightarrow 0} c_{\e}\leq c_{V_{0}}$.
\end{proof}

\noindent
Arguing as in \cite{Apota}, we can deduce the following result for the autonomous problem:
\begin{lemma}\label{FS}
Let $(u_{n})\subset \mathcal{N}_{0}$ be a sequence satisfying $J_{0}(u_{n})\rightarrow c_{V_{0}}$. Then, up to subsequences, the following alternatives holds:
\begin{enumerate}
\item [$(i)$] $(u_{n})$ strongly converges in $H^{s}(\R^{N}, \R)$, 
\item [$(ii)$] there exists a sequence $(\tilde{y}_{n})\subset \R^{N}$ such that,  up to a subsequence, $v_{n}(x)=u_{n}(x+\tilde{y}_{n})$ converges strongly in $H^{s}(\R^{N}, \R)$.
\end{enumerate}
In particular, there exists a minimizer for $c_{V_{0}}$.
\end{lemma}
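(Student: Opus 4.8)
The plan is to follow the classical concentration--compactness scheme for minimizing sequences on the Nehari manifold, exactly as in \cite{Apota}. I first record two elementary facts about $\mathcal{N}_0$. For every $u\in\mathcal{N}_0$,
\[
J_0(u)=J_0(u)-\tfrac14\langle J_0'(u),u\rangle=\tfrac14\|u\|_{V_0}^{2}+\tfrac14\int_{\R^{N}}\Bigl(\tfrac{1}{|x|^{\mu}}*F(|u|^{2})\Bigr)\bigl(f(|u|^{2})|u|^{2}-F(|u|^{2})\bigr)\,dx,
\]
and since $F\ge 0$ and $0\le F(t)\le f(t)t$ by $(f_3)$, the right-hand side is a sum of nonnegative terms, so $J_0(u)\ge\tfrac14\|u\|_{V_0}^{2}$; hence $(u_n)$ is bounded in $H^{s}(\R^{N},\R)$. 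On the other hand, arguing as in the proof of Lemma \ref{MPG}$(i)$ with $J_0$ in place of $J_\e$ (i.e.\ using $(f_1)$, $(f_2)$, Theorem \ref{HLS} and Theorem \ref{Sembedding}), one gets $\delta_0>0$ with $\|u\|_{V_0}^{2}\ge\delta_0$ for all $u\in\mathcal{N}_0$, so that $c_{V_0}\ge\tfrac14\delta_0>0$. Arguing as in \cite{Apota} (Nehari manifold method together with Ekeland's variational principle), I may moreover assume $J_0'(u_n)\to 0$ in $(H^{s}(\R^{N},\R))'$. Up to a subsequence, $u_n\rightharpoonup u$ in $H^{s}(\R^{N},\R)$, $u_n\to u$ in $L^{r}_{loc}(\R^{N})$ for $r\in[2,\2)$, and $u_n\to u$ a.e.\ in $\R^{N}$.

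I then distinguish two cases. If $u\neq 0$, I claim $u_n\to u$ strongly in $H^{s}(\R^{N},\R)$ and $J_0(u)=c_{V_0}$: from $J_0'(u_n)\to 0$, weak convergence, and the weak continuity of $u\mapsto\frac{1}{|x|^{\mu}}*F(|u|^{2})$ together with the standard convergence of the Choquard nonlinearity against test functions (see e.g.\ \cite{MVS2, DSS, Apota}), one obtains $J_0'(u)=0$, whence $u\in\mathcal{N}_0$ and $J_0(u)\ge c_{V_0}$. Combining the Hilbert identity $\|u_n\|_{V_0}^{2}=\|u\|_{V_0}^{2}+\|u_n-u\|_{V_0}^{2}+o_n(1)$ with the Brezis--Lieb type splitting for the Choquard energy $\mathfrak{F}$ and for the functional $u\mapsto\langle J_0'(u),u\rangle$ (cf.\ \cite{MVS2, Apota}), one gets $\Psi(u_n)=\Psi(u)+\Psi(u_n-u)+o_n(1)$, where $\Psi(v):=J_0(v)-\tfrac14\langle J_0'(v),v\rangle\ge\tfrac14\|v\|_{V_0}^{2}$. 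Since $\Psi(u_n)=J_0(u_n)\to c_{V_0}$ and $\Psi(u)=J_0(u)\ge c_{V_0}$, it follows that $\Psi(u_n-u)\to 0$, hence $\|u_n-u\|_{V_0}\to 0$, so $u_n\to u$ in $H^{s}(\R^{N},\R)$ and $J_0(u)=c_{V_0}$. This is alternative $(i)$, and $u$ is a minimizer for $c_{V_0}$.

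If instead $u=0$, I first rule out vanishing: if $\sup_{y\in\R^{N}}\int_{B_1(y)}|u_n|^{2}\,dx\to 0$, then by Lions' lemma $u_n\to 0$ in $L^{r}(\R^{N})$ for all $r\in(2,\2)$, and since $|F(|u|^{2})|\le C(|u|^{2}+|u|^{q})$ with $2<q<\frac{2(N-\mu)}{N-2s}$, the Hardy--Littlewood--Sobolev inequality and Theorem \ref{Sembedding} (exactly the estimate used in Lemma \ref{MPG}, valid because $\mu<2s$ forces the relevant exponents into $(2,\2)$) give $\int_{\R^{N}}\bigl(\frac{1}{|x|^{\mu}}*F(|u_n|^{2})\bigr)f(|u_n|^{2})|u_n|^{2}\,dx\to 0$, contradicting $\|u_n\|_{V_0}^{2}=\int_{\R^{N}}\bigl(\frac{1}{|x|^{\mu}}*F(|u_n|^{2})\bigr)f(|u_n|^{2})|u_n|^{2}\,dx\ge\delta_0$. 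Hence there are $\beta>0$, $R>0$ and $(\tilde y_n)\subset\R^{N}$ with $\int_{B_R(\tilde y_n)}|u_n|^{2}\,dx\ge\beta$; setting $v_n(x):=u_n(x+\tilde y_n)$ and using that $[\,\cdot\,]$, $\int_{\R^{N}}V_0|\cdot|^{2}$ and the convolution term are translation invariant, one has $v_n\in\mathcal{N}_0$, $J_0(v_n)\to c_{V_0}$, $J_0'(v_n)\to 0$, $(v_n)$ bounded, and (up to a subsequence) $v_n\rightharpoonup v$ with $\int_{B_R(0)}|v|^{2}\,dx\ge\beta$ by compact embedding, so $v\neq 0$. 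Applying to $(v_n)$ the argument of the previous case yields $v_n\to v$ strongly in $H^{s}(\R^{N},\R)$ and $J_0(v)=c_{V_0}$, i.e.\ alternative $(ii)$ holds and $v$ is a minimizer for $c_{V_0}$. In both cases a minimizer for $c_{V_0}$ has been produced, which gives the last assertion.

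I expect the main obstacle to be the analysis of the nonlocal Choquard nonlinearity: establishing, in the fractional setting and for a general (non-power) $f$, the weak continuity of $u\mapsto\frac{1}{|x|^{\mu}}*F(|u|^{2})$ needed to pass $J_0'(u_n)\to 0$ to the weak limit, and the Brezis--Lieb type decomposition of $\mathfrak{F}$ and of $u\mapsto\langle J_0'(u),u\rangle$. These steps, together with the exclusion of vanishing, rely on the subcritical regime $0<\mu<2s$, $2<q<\frac{2(N-\mu)}{N-2s}$, which keeps every exponent appearing in the Hardy--Littlewood--Sobolev and fractional Sobolev estimates strictly inside $(2,\2)$.
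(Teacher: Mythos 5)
Your proof is correct and coincides with the approach the paper intends: the paper gives no argument for Lemma \ref{FS}, deferring entirely to \cite{Apota}, whose proof is precisely your scheme (boundedness via $J_{0}-\tfrac{1}{4}\langle J_{0}'(\cdot),\cdot\rangle$, reduction to a Palais--Smale minimizing sequence through the Nehari manifold and Ekeland's principle, exclusion of vanishing by Lions' lemma combined with the Hardy--Littlewood--Sobolev estimate, and strong convergence of the (possibly translated) sequence via the Brezis--Lieb splitting of the Choquard term). The only point worth stating explicitly is that Ekeland produces a Palais--Smale minimizing sequence at distance $o_{n}(1)$ from the given one rather than allowing you to assume the given sequence itself is Palais--Smale, and that for $f$ merely continuous this reduction should be run through the Szulkin--Weth reformulation on the unit sphere (legitimate here thanks to $(f_{3})$); since the conclusion is stable under an $o_{n}(1)$ perturbation, this is harmless.
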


\noindent
Now, we prove the following useful compactness result.
\begin{lemma}\label{prop3.3}
Let $\e_{n}\rightarrow 0$ and $(u_{n})\subset H^{s}_{\e_{n}}$ such that $J_{\e_{n}}(u_{n})=c_{\e_{n}}$ and $J'_{\e_{n}}(u_{n})=0$. Then there exists $(\tilde{y}_{n})\subset \R^{N}$ such that $v_{n}(x)=|u_{n}|(x+\tilde{y}_{n})$ has a convergent subsequence in $H^{s}(\R^{N}, \R)$. Moreover, up to a subsequence, $y_{n}=\e_{n} \tilde{y}_{n}\rightarrow y_{0}$ for some $y_{0}\in \Lambda$ such that $V(y_{0})=V_{0}$.
\end{lemma}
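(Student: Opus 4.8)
The plan is to run a del Pino--Felmer type concentration--compactness argument in the spirit of \cite{DPF}, systematically using the diamagnetic inequality (Lemma~\ref{DI}) and a gauge transformation to reduce the analysis to the scalar autonomous problem \eqref{APe}, and exploiting Lemma~\ref{lemK} to treat the Hartree convolution as a bounded factor. First, as at the beginning of the proof of Lemma~\ref{PSc} and using $\limsup_n c_{\e_n}\le c_{V_0}<\kappa$ from Lemma~\ref{AMlem1}, the relations $J_{\e_n}(u_n)=c_{\e_n}$, $J'_{\e_n}(u_n)=0$, $(g_3)$ and Lemma~\ref{lemK} give $\|u_n\|_{\e_n}^2\le 4(\kappa+1)$ for $n$ large; hence by Lemma~\ref{DI} and $(V_1)$ the sequence $(|u_n|)$ is bounded in $H^s(\R^N,\R)$. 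I would then exclude vanishing: if $\sup_{y\in\R^N}\int_{B_R(y)}|u_n|^2\,dx\to 0$ for every $R>0$, Lions' lemma gives $|u_n|\to 0$ in $L^r(\R^N)$ for all $r\in(2,2^*_s)$; combining Theorem~\ref{HLS}, $(g_1)$--$(g_2)$, the bound $|G(\e_n x,|u_n|^2)|+g(\e_n x,|u_n|^2)|u_n|^2\le C(|u_n|^2+|u_n|^q)$ and the restriction $q<\frac{2(N-\mu)}{N-2s}$ (which places all the resulting exponents in $(2,2^*_s)$), one obtains $\int_{\R^N}\tilde K_{\e_n}(u_n)\,g(\e_n x,|u_n|^2)|u_n|^2\,dx\to 0$, so that $\langle J'_{\e_n}(u_n),u_n\rangle=0$ forces $\|u_n\|_{\e_n}\to 0$, contradicting $\inf_n c_{\e_n}>0$ (Lemma~\ref{MPG}). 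Therefore there exist $R,\gamma>0$ and $(\tilde y_n)\subset\R^N$ with $\liminf_n\int_{B_R(\tilde y_n)}|u_n|^2\,dx\ge\gamma$, and, setting $v_n:=|u_n|(\cdot+\tilde y_n)$, we may assume (along a subsequence) $v_n\rightharpoonup v$ in $H^s(\R^N,\R)$, $v_n\to v$ in $L^r_{\mathrm{loc}}(\R^N)$ and a.e., with $v\neq 0$.

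Next I would prove an energy identity together with the strong convergence of $v_n$. Using Lemma~\ref{DI}, $V_{\e_n}\ge V_0$, the pointwise inequality $0\le G(\e_n x,\cdot)\le F(\cdot)$, the positivity of the Riesz kernel and the translation $x\mapsto x+\tilde y_n$, a direct computation yields $J_0(tv_n)\le J_{\e_n}(tu_n)\le\max_{\tau\ge 0}J_{\e_n}(\tau u_n)=c_{\e_n}$ for every $t\ge 0$. Letting $t_n>0$ be the unique number with $t_nv_n\in\mathcal N_0$, this gives $c_{V_0}\le J_0(t_nv_n)\le c_{\e_n}$, which combined with $\limsup_n c_{\e_n}\le c_{V_0}$ forces $c_{\e_n}\to c_{V_0}$ and $J_0(t_nv_n)\to c_{V_0}$. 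Since $v\neq 0$, the monotonicity in $(f_1)$--$(f_3)$ gives $t_n\to t_*\in(0,\infty)$ along a subsequence; applying Lemma~\ref{FS} to $(t_nv_n)\subset\mathcal N_0$ and ruling out the nontrivial-translation alternative (which would contradict $t_nv_n\rightharpoonup t_*v\neq 0$) yields $t_nv_n\to t_*v$, hence $v_n\to v$, strongly in $H^s(\R^N,\R)$. This proves the first assertion of the lemma.

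It remains to locate $y_n:=\e_n\tilde y_n$. I claim $\mathrm{dist}(y_n,\Lambda)\to 0$. If not, then along a subsequence $\mathrm{dist}(y_n,\Lambda)\ge 2\delta_0>0$, so $\mathrm{dist}(\tilde y_n,\Lambda_{\e_n})=\e_n^{-1}\mathrm{dist}(y_n,\Lambda)\to\infty$ and $\Lambda_{\e_n}-\tilde y_n\subset\R^N\setminus B_{\delta_0/\e_n}$. Writing $\|u_n\|_{\e_n}^2=\int_{\R^N}\tilde K_{\e_n}(u_n)\,g(\e_n x,|u_n|^2)|u_n|^2\,dx$, splitting the integral over $\Lambda_{\e_n}$ and $\Lambda_{\e_n}^c$, using that $\int_{\Lambda_{\e_n}}(|u_n|^2+|u_n|^q)\,dx=\int_{\Lambda_{\e_n}-\tilde y_n}(v_n^2+v_n^q)\,dz\to 0$ (because $v_n\to v$ in $L^2\cap L^q$) and that $g(\e_n x,\cdot)=\tilde f(\cdot)\le V_0/\ell_0$ on $\Lambda_{\e_n}^c$ while $\|\tilde K_{\e_n}(u_n)\|_{L^\infty}<\ell_0/2$ by Lemma~\ref{lemK}, one gets $\|u_n\|_{\e_n}^2\le\tfrac{V_0}{2}\|u_n\|_{L^2}^2+o(1)$; since also $\|u_n\|_{\e_n}^2\ge[v_n]^2+V_0\|u_n\|_{L^2}^2$, this forces $[v_n]^2+\tfrac{V_0}{2}\|v_n\|_{L^2}^2\le o(1)$, contradicting $\|v_n\|_{L^2}^2\to\|v\|_{L^2}^2>0$. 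Hence $\mathrm{dist}(y_n,\Lambda)\to 0$, so $(y_n)$ is bounded and, up to a subsequence, $y_n\to y_0\in\overline\Lambda$. Finally, passing to the limit in the equation for the translated and gauged functions $\hat w_n:=e^{-\imath A(y_n)\cdot x}u_n(\cdot+\tilde y_n)$ — for which $|\hat w_n|=v_n\to v$ and the translated magnetic potential $A(\e_n\,\cdot+y_n)-A(y_n)$ tends to $0$ locally uniformly, since $A\in C^{0,\alpha}$ — and using the strong convergence $v_n\to v$ and Fatou's lemma in $c_{\e_n}=\tfrac14\|u_n\|_{\e_n}^2+\tfrac14\int_{\R^N}\tilde K_{\e_n}(u_n)\big[g(\e_n x,|u_n|^2)|u_n|^2-G(\e_n x,|u_n|^2)\big]\,dx$, one relates $v$ to the autonomous problem with constant potential $V(y_0)$ and obtains $\liminf_n c_{\e_n}\ge c_{V(y_0)}$, where $c_b$ denotes the mountain-pass level of $u\mapsto\tfrac12\big([u]^2+b\|u\|_{L^2}^2\big)-\mathfrak F(u)$ on $H^s(\R^N,\R)$. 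Since $c_{\e_n}\to c_{V_0}$, $V(y_0)\ge V_0$, and $b\mapsto c_b$ is continuous and strictly increasing, we conclude $c_{V(y_0)}=c_{V_0}$, hence $V(y_0)=V_0$; and then $(V_2)$, which gives $V_0<\min_{\partial\Lambda}V$, excludes $y_0\in\partial\Lambda$, so that $y_0\in\Lambda$.

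The hard part will be the final portion of the third step: making the limit passage in the magnetic equation for the translated functions fully rigorous (the gauge reduction, the local compactness of Theorem~\ref{Sembedding}, and the control of the nonlocal terms at infinity) while simultaneously extracting, through the diamagnetic inequality and Fatou's lemma, the sharp lower energy bound $\liminf_n c_{\e_n}\ge c_{V(y_0)}$ that pins down $V(y_0)=V_0$, including the a posteriori exclusion of $y_0\in\partial\Lambda$ via $(V_2)$.
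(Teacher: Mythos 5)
Up to the localization of $y_n$, your argument is essentially the paper's: uniform bound $\|u_n\|_{\e_n}^2\le 4(\kappa+1)$, exclusion of vanishing via Lions' lemma and Lemma \ref{lemK}, projection $t_nv_n\in\mathcal N_0$ with $c_{V_0}\le J_0(t_nv_n)\le c_{\e_n}$, Lemma \ref{AMlem1} and Lemma \ref{FS} giving $v_n\to v$ strongly, and the penalization estimate (with $\tilde f\le V_0/\ell_0$ and $\|\tilde K_{\e_n}(u_n)\|_{L^\infty}$ controlled by Lemma \ref{lemK}) forcing $y_n\to y_0\in\overline\Lambda$; organizing this through $\mathrm{dist}(y_n,\Lambda)\to 0$ rather than first boundedness and then $y_0\notin\overline\Lambda$ is only a cosmetic change.

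The genuine gap is in the decisive last step, the proof that $V(y_0)=V_0$, which you only sketch and yourself flag as ``the hard part''. The route you propose --- gauge away $A$, pass to the limit in the equation for $\hat w_n=e^{-\imath A(y_n)\cdot x}u_n(\cdot+\tilde y_n)$, identify $v$ with a solution of the autonomous problem with potential $V(y_0)$, and conclude via $\liminf_n c_{\e_n}\ge c_{V(y_0)}$ together with continuity and strict monotonicity of $b\mapsto c_b$ --- meets two concrete obstructions that you do not address. First, the case you must rule out is precisely $y_0\in\partial\Lambda$ with $V(y_0)>V_0$, and there the translated penalized nonlinearity $g(\e_n x+y_n,\cdot)=\chi_{\Lambda}(\e_n x+y_n)f+(1-\chi_{\Lambda}(\e_n x+y_n))\tilde f$ has no pointwise limit (the characteristic function oscillates), so the ``limit equation'' is not the autonomous problem with $f$; at best one gets a problem with a nonlinearity between $\tilde f$ and $f$, and relating its level to $c_{V(y_0)}$, as well as passing to the limit in the nonlocal magnetic form and in the Hartree term, requires extra work you have not supplied. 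Second, the strict monotonicity and continuity of $b\mapsto c_b$ is an additional unproved ingredient (provable, e.g. by projecting a ground state at level $b'$ onto $\mathcal N_b$, but nowhere available in the paper). The paper closes this step without any limit-equation analysis: assuming $V(y_0)>V_0$, it uses the strong convergence $\tilde v_n=t_nv_n\to\tilde v\neq 0$, Fatou's lemma, the diamagnetic inequality and $G\le F$ to write the strict chain $c_{V_0}=J_0(\tilde v)<\tfrac12[\tilde v]^2+\tfrac12\int_{\R^N}V(y_0)\tilde v^2\,dx-\mathfrak F(\tilde v)\le\liminf_n J_{\e_n}(t_nu_n)\le\liminf_n J_{\e_n}(u_n)\le c_{V_0}$, a contradiction; then $(V_2)$ excludes $y_0\in\partial\Lambda$. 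If you prefer your formulation through $c_{V(y_0)}$, you could still avoid the limit equation by projecting $\tilde v$ onto the Nehari manifold of the $V(y_0)$-problem and running the same Fatou/diamagnetic comparison, but as written your proposal leaves the key conclusion unestablished.
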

\begin{proof}
Taking into account $\langle J'_{\e_{n}}(u_{n}), u_{n}\rangle=0$, $J_{\e_{n}}(u_{n})= c_{\e_{n}}$, Lemma \ref{AMlem1} and arguing as in Lemma \ref{PSc}, it is easy to see that $(u_{n})$ is bounded in $H^{s}_{\e_{n}}$ and $\|u_{n}\|^{2}_{\e_{n}}\leq 4(\kappa+1)$ for all $n\in \N$. Moreover, from Lemma \ref{DI}, we also know that $(|u_{n}|)$ is bounded in $H^{s}(\R^{N}, \R)$.\\
Let us prove that there exist a sequence $(\tilde{y}_{n})\subset \R^{N}$ and constants $R>0$ and $\gamma>0$ such that
\begin{equation}\label{sacchi}
\liminf_{n\rightarrow \infty}\int_{B_{R}(\tilde{y}_{n})} |u_{n}|^{2} \, dx\geq \gamma>0.
\end{equation}
Otherwise, if \eqref{sacchi} does not hold, then for all $R>0$ we have
$$
\lim_{n\rightarrow \infty}\sup_{y\in \R^{N}}\int_{B_{R}(y)} |u_{n}|^{2} \, dx=0.
$$
From the boundedness $(|u_{n}|)$ and Lemma $2.2$ in \cite{FQT} we can see that $|u_{n}|\rightarrow 0$ in $L^{q}(\R^{N}, \R)$ for any $q\in (2, 2^{*}_{s})$. 
By using $(g_1)$-$(g_2)$ and Lemma \ref{lemK} we  can deduce that
\begin{align}\label{glimiti}
\lim_{n\rightarrow \infty}\int_{\R^{N}} \tilde{K}_{\e_{n}}(u_{n})  g(\e_{n} x, |u_{n}|^{2}) |u_{n}|^{2} \,dx=0= \lim_{n\rightarrow \infty}\int_{\R^{N}} \tilde{K}_{\e_{n}}(u_{n}) G(\e_{n} x, |u_{n}|^{2}) \, dx.
\end{align}
Since $\langle J'_{\e_{n}}(u_{n}), u_{n}\rangle=0$, we can use \eqref{glimiti} to deduce that $\|u_{n}\|_{\e_{n}}\rightarrow 0$ as $n\rightarrow \infty$. This gives a contradiction because $u_{n}\in \mathcal{N}_{\e_{n}}$ and by using $(g_1)$, $(g_2)$ and Lemma \ref{lemK} we can find $\alpha_{0}>0$ such that $\|u_{n}\|^{2}_{\e_{n}}\geq \alpha_{0}$ for all $n\in \mathbb{N}$.\\
Set $v_{n}(x)=|u_{n}|(x+\tilde{y}_{n})$. Then $(v_{n})$ is bounded in $H^{s}(\R^{N}, \R)$, and we may assume that 
$v_{n}\rightharpoonup v\not\equiv 0$ in $H^{s}(\R^{N}, \R)$  as $n\rightarrow \infty$.
Fix $t_{n}>0$ such that $\tilde{v}_{n}=t_{n} v_{n}\in \mathcal{N}_{0}$. By using Lemma \ref{DI}, we can see that 
$$
c_{V_{0}}\leq J_{0}(\tilde{v}_{n})\leq \max_{t\geq 0}J_{\e_{n}}(tv_{n})= J_{\e_{n}}(u_{n})=c_{\e_{n}}
$$
which together with Lemma \ref{AMlem1} implies that $J_{0}(\tilde{v}_{n})\rightarrow c_{V_{0}}$. In particular, $\tilde{v}_{n}\nrightarrow 0$ in $H^{s}(\R^{N}, \R)$.
Since $(v_{n})$ and $(\tilde{v}_{n})$ are bounded in $H^{s}(\R^{N}, \R)$ and $\tilde{v}_{n}\nrightarrow 0$  in $H^{s}(\R^{N}, \R)$, we obtain that $t_{n}\rightarrow t^{*}> 0$. 
From the uniqueness of the weak limit, we can deduce that $\tilde{v}_{n}\rightharpoonup \tilde{v}=t^{*}v\not\equiv 0$ in $H^{s}(\R^{N}, \R)$. 
This together with Lemma \ref{FS} gives
\begin{equation}\label{elena}
\tilde{v}_{n}\rightarrow \tilde{v} \mbox{ in } H^{s}(\R^{N}, \R),
\end{equation} 
and as a consequence $v_{n}\rightarrow v$ in $H^{s}(\R^{N}, \R)$ as $n\rightarrow \infty$.

Now, we set $y_{n}=\e_{n}\tilde{y}_{n}$. We aim to prove that $(y_{n})$ admits a subsequence, still denoted by $y_{n}$, such that $y_{n}\rightarrow y_{0}$ for some $y_{0}\in \Lambda$ such that $V(y_{0})=V_{0}$. Firstly, we prove that $(y_{n})$ is bounded. Assume by contradiction that, up to a subsequence, $|y_{n}|\rightarrow \infty$ as $n\rightarrow \infty$. Take $R>0$ such that $\Lambda \subset B_{R}(0)$. Since we may suppose that  $|y_{n}|>2R$, we have that $|\e_{n}z+y_{n}|\geq |y_{n}|-|\e_{n}z|>R$ for any $z\in B_{R/\e_{n}}$. 
Taking into account $(u_{n})\subset \mathcal{N}_{\e_{n}}$, $(V_{1})$, Lemma \ref{DI} and the change of variable $x\mapsto z+\tilde{y}_{n}$ we get
\begin{align*}
&[v_{n}]^{2}+\int_{\R^{N}} V_{0} v_{n}^{2}\, dx \\
&\leq C_{0}\int_{\R^{N}}  g(\e_{n} z+y_{n}, |v_{n}|^{2}) |v_{n}|^{2} \, dz \\
&\leq C_{0}\int_{B_{\frac{R}{\e_{n}}}(0)}  \tilde{f}(|v_{n}|^{2}) |v_{n}|^{2} \, dz+C_{0}\int_{\R^{N}\setminus B_{\frac{R}{\e_{n}}}(0)}  f(|v_{n}|^{2}) |v_{n}|^{2} \, dz,
\end{align*}
where we used  $u_{n}\in \B$ for all $n$ big enough and Lemma \ref{lemK}.
By using $v_{n}\rightarrow v$ in $H^{s}(\R^{N}, \R)$ as $n\rightarrow \infty$ and $\tilde{f}(t)\leq \frac{V_{0}}{\ell_{0}}$ we obtain
$$
\min\left\{1, \frac{V_{0}}{2} \right\} \left([v_{n}]^{2}+\int_{\R^{N}} |v_{n}|^{2}\, dx\right)=o_{n}(1).
$$
Then $v_{n}\rightarrow 0$ in $H^{s}(\R^{N}, \R)$ and this is impossible. Therefore, $(y_{n})$ is bounded and we may assume that $y_{n}\rightarrow y_{0}\in \R^{N}$. If $y_{0}\notin \overline{\Lambda}$, we can argue as before to deduce that $v_{n}\rightarrow 0$ in $H^{s}(\R^{N}, \R)$, which gives a contradiction. Therefore $y_{0}\in \overline{\Lambda}$, and in view of $(V_2)$, it is enough to verify that $V(y_{0})=V_{0}$ to conclude the proof of lemma. Assume by contradiction that $V(y_{0})>V_{0}$.

Then, by using (\ref{elena}), Fatou's Lemma, the invariance of  $\R^{N}$ by translations, Lemma \ref{DI} and Lemma \ref{AMlem1}, we get 
\begin{align*}
&c_{V_{0}}=J_{0}(\tilde{v})<\frac{1}{2}[\tilde{v}]^{2}+\frac{1}{2}\int_{\R^{N}} V(y_{0})\tilde{v}^{2} \, dx-\mathfrak{F}(\tilde{v}) \\
&\leq \liminf_{n\rightarrow \infty}\Bigl[\frac{1}{2}[\tilde{v}_{n}]^{2}+\frac{1}{2}\int_{\R^{N}} V(\e_{n}z+y_{n}) |\tilde{v}_{n}|^{2} \, dz-\mathfrak{F}(\tilde{v}_{n}) \Bigr] \\ 
&\leq \liminf_{n\rightarrow \infty}\Bigl[\frac{t_{n}^{2}}{2}[|u_{n}|]^{2}+\frac{t_{n}^{2}}{2}\int_{\R^{N}} V(\e_{n}z) |u_{n}|^{2} \, dz-\mathfrak{F}(t_{n} u_{n})  \Bigr] \\
&\leq \liminf_{n\rightarrow \infty} J_{\e_{n}}(t_{n} u_{n}) \leq \liminf_{n\rightarrow \infty} J_{\e_{n}}(u_{n})\leq c_{V_{0}}
\end{align*}
which gives a contradiction.
\end{proof}

\noindent
The next lemma will be fundamental to prove that the solutions of \eqref{Pe} are also solutions  of the original problem \eqref{P}. We will use a suitable variant of the Moser iteration argument \cite{Moser}.
\begin{lemma}\label{moser} 
Let $\e_{n}\rightarrow 0$ and $u_{n}\in H^{s}_{\e_{n}}$ be a solution to \eqref{Pe}. 
Then $v_{n}=|u_{n}|(\cdot+\tilde{y}_{n})$ satisfies $v_{n}\in L^{\infty}(\R^{N},\R)$ and there exists $C>0$ such that 
$$
\|v_{n}\|_{L^{\infty}(\R^{N})}\leq C \mbox{ for all } n\in \mathbb{N},
$$
where $\tilde{y}_{n}$ is given by Lemma \ref{prop3.3}.
Moreover
$$
\lim_{|x|\rightarrow \infty} v_{n}(x)=0 \mbox{ uniformly in } n\in \mathbb{N}.
$$
\end{lemma}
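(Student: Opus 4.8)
The plan is to first establish the uniform $L^{\infty}$ bound via a Moser iteration carried out on the magnetic equation \eqref{Pe}, and then to obtain the uniform decay by deriving a Kato-type differential inequality for $|u_{n}|$ and comparing it with a supersolution of a non-magnetic problem; throughout, the recurring point is to keep every constant independent of $n$. As a first step I would note that, since $u_{n}$ is a critical point of $J_{\e_{n}}$ at level $c_{\e_{n}}$, arguing as in Lemma \ref{PSc} one gets $\|u_{n}\|_{\e_{n}}^{2}\le 4(\kappa+1)$, so $u_{n}\in\B$ and Lemma \ref{lemK} yields $\sup_{n}\|\tilde{K}_{\e_{n}}(u_{n})\|_{L^{\infty}(\R^{N})}\le C$. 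Combined with $(g_1)$--$(g_2)$, i.e. $g(x,t)\le\eta+C_{\eta}t^{\frac{q-2}{2}}$, the right-hand side of \eqref{Pe} is dominated in modulus by $C(\eta|u_{n}|+C_{\eta}|u_{n}|^{q-1})$ for every $\eta>0$, with $q\in(2,2^{*}_{s})$. By Lemma \ref{prop3.3}, $v_{n}=|u_{n}|(\cdot+\tilde{y}_{n})\to v$ strongly in $H^{s}(\R^{N},\R)$, so $(v_{n})$ is bounded in $H^{s}(\R^{N},\R)$, converges to $v$ in $L^{2}(\R^{N})$, and therefore
$$
\sup_{n\in\N}\|v_{n}\|_{L^{2}(\R^{N}\setminus B_{R})}\longrightarrow 0 \quad\text{as } R\to\infty. \qquad (\star)
$$

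For the uniform $L^{\infty}$ estimate, using the diamagnetic inequality (Lemma \ref{DI}) and the boundedness of $\tilde{K}_{\e_{n}}(u_{n})$, I would test \eqref{Pe} with suitable truncations of $u_{n}|u_{n}|^{2(\beta-1)}$ and run a Moser-type iteration (cf. \cite{Moser}, \cite{AM}): taking $\eta$ small enough the linear term is absorbed into the left-hand side, while the subcriticality $q<2^{*}_{s}$ forces $|u_{n}|^{q-2}$ to lie in $L^{p_{0}}(\R^{N})$ for some $p_{0}>\frac{N}{2s}$, with norm bounded by $\|u_{n}\|_{\e_{n}}$ via Lemma \ref{DI} and Theorem \ref{Sembedding}, hence uniformly in $n$. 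The iteration then yields $\|u_{n}\|_{L^{\infty}(\R^{N})}\le C$, with $C$ depending only on $N,s,V_{0},q$ and $4(\kappa+1)$; in particular $\|v_{n}\|_{L^{\infty}(\R^{N})}=\||u_{n}|\|_{L^{\infty}(\R^{N})}\le C$ for every $n$. Interpolating this bound with $(\star)$ gives
$$
\sup_{n\in\N}\|v_{n}\|_{L^{p}(\R^{N}\setminus B_{R})}\longrightarrow 0\quad\text{as } R\to\infty,\ \text{ for every } p\in[2,\infty). \qquad (\star\star)
$$

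For the decay, since a Kato inequality is not directly available for $(-\Delta)^{s}_{A}$, I would approximate $|u_{n}|$ by $\sqrt{|u_{n}|^{2}+\delta^{2}}-\delta$, manipulate the double integrals defining $(-\Delta)^{s}_{A_{\e_{n}}}$, and pass to the limit $\delta\to0^{+}$ to show that $w_{n}:=|u_{n}|$ is a weak subsolution of
$$
(-\Delta)^{s}w_{n}+V_{\e_{n}}(x)w_{n}\le\tilde{K}_{\e_{n}}(u_{n})\,g(\e_{n}x,w_{n}^{2})\,w_{n}\quad\text{in } \R^{N};
$$
this is the step related to the fractional Kato inequality \cite{Kato}. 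Translating by $\tilde{y}_{n}$ and using $V_{\e_{n}}\ge V_{0}$, the function $v_{n}$ becomes a weak subsolution of $(-\Delta)^{s}v_{n}+V_{0}v_{n}\le W_{n}v_{n}$ in $\R^{N}$, where $0\le W_{n}\le C(1+v_{n}^{q-2})\le\tilde{C}$ by the previous step. Let $\mathcal{G}$ be the positive fundamental solution of $(-\Delta)^{s}+V_{0}$ on $\R^{N}$; it satisfies $\mathcal{G}\in L^{1}(\R^{N})\cap L^{r'}(\R^{N})$ for every $r'\in[1,\frac{N}{N-2s})$ and $\sup_{|z|\ge R}\mathcal{G}(z)\to0$ as $R\to\infty$. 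Setting $z_{n}:=\mathcal{G}*(W_{n}v_{n})\ge0$, one has $(-\Delta)^{s}z_{n}+V_{0}z_{n}=W_{n}v_{n}$ and $z_{n}\in H^{s}(\R^{N},\R)$; testing $(-\Delta)^{s}(v_{n}-z_{n})+V_{0}(v_{n}-z_{n})\le0$ with $(v_{n}-z_{n})_{+}\in H^{s}(\R^{N},\R)$ and using $\langle(-\Delta)^{s/2}\varphi,(-\Delta)^{s/2}\varphi_{+}\rangle\ge[\varphi_{+}]^{2}$ gives $(v_{n}-z_{n})_{+}\equiv0$, that is $0\le v_{n}\le z_{n}$ on $\R^{N}$. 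It then remains to show $z_{n}(x)\to0$ as $|x|\to\infty$ uniformly in $n$: splitting $W_{n}v_{n}=(W_{n}v_{n})\chi_{B_{R}}+(W_{n}v_{n})\chi_{\R^{N}\setminus B_{R}}$ and using $0\le W_{n}\le\tilde{C}$, the first contribution is bounded, for $|x|\ge 2R$, by $\tilde{C}\big(\sup_{|z|\ge|x|-R}\mathcal{G}(z)\big)\|v_{n}\|_{L^{1}(B_{R})}\le C_{R}\sup_{|z|\ge|x|-R}\mathcal{G}(z)$, which tends to $0$ as $|x|\to\infty$, uniformly in $n$, while the second contribution is bounded in $L^{\infty}(\R^{N})$ by $\tilde{C}\|\mathcal{G}\|_{L^{r'}}\|v_{n}\|_{L^{r}(\R^{N}\setminus B_{R})}$ with $r'\in(1,\min\{2,\frac{N}{N-2s}\})$ and $r=\frac{r'}{r'-1}\ge2$, which by $(\star\star)$ is as small as desired, uniformly in $n$, once $R$ is large.

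I expect the main obstacle to be the Kato-type inequality in the magnetic fractional framework: the pointwise identities available when $s=1$ or $A=0$ cannot be used here, so the argument must be rebuilt through a careful regularization and a limiting procedure on the nonlocal double integrals defining $(-\Delta)^{s}_{A_{\e_{n}}}$, after which all the constants in the comparison and in the kernel estimates must be tracked so that the decay is genuinely uniform in $n$. A secondary difficulty is closing the nonlocal Moser iteration with constants that do not depend on $n$, for which the a priori bound $\|u_{n}\|_{\e_{n}}^{2}\le4(\kappa+1)$ and the boundedness of the convolution term from Lemma \ref{lemK} are decisive.
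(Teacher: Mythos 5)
Your proposal is correct and follows essentially the same route as the paper: a Moser iteration on the magnetic equation (using the pointwise inequality behind Lemma \ref{DI} and the uniform bound of Lemma \ref{lemK}) for the $L^{\infty}$ estimate, a Kato-type regularization with $u_{\delta,n}=\sqrt{|u_{n}|^{2}+\delta^{2}}$ to show that $|u_{n}|$ is a weak subsolution of a non-magnetic fractional equation, and comparison with $z_{n}$ given by convolution with the Bessel-type kernel of $(-\Delta)^{s}+V_{0}$. The only difference is that you prove the uniform decay of $z_{n}$ directly, by splitting the convolution and combining the kernel's $L^{r'}$-integrability and decay with the uniform tail-smallness of $v_{n}$ obtained by interpolating the $L^{\infty}$ bound with the strong $L^{2}$ convergence from Lemma \ref{prop3.3}, whereas the paper obtains this step by invoking the argument of \cite{AM}; both are sound.
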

\begin{proof}
For any $L>0$ we define $u_{L,n}:=\min\{|u_{n}|, L\}\geq 0$ and we set $v_{L, n}=u_{L,n}^{2(\beta-1)}u_{n}$ where $\beta>1$ will be chosen later.
Taking $v_{L, n}$ as test function in (\ref{Pe}) we can see that
\begin{align}\label{conto1FF}
&\Re\Bigl(\iint_{\R^{2N}} \frac{(u_{n}(x)-u_{n}(y)e^{\imath A(\frac{x+y}{2})\cdot (x-y)})}{|x-y|^{N+2s}} \times \nonumber\\
&\quad \times \overline{((u_{n}u_{L,n}^{2(\beta-1)})(x)-(u_{n}u_{L,n}^{2(\beta-1)})(y)e^{\imath A(\frac{x+y}{2})\cdot (x-y)})}  dx dy\Bigr)   \nonumber \\
&=\int_{\R^{N}} \tilde{K}_{\e}(u_{n}) g(\e_{n} x, |u_{n}|^{2}) |u_{n}|^{2}u_{L,n}^{2(\beta-1)}  \,dx-\int_{\R^{N}} V(\e_{n} x) |u_{n}|^{2} u_{L,n}^{2(\beta-1)} \, dx.
\end{align}
Let us observe that
\begin{align*}
&\Re\left[(u_{n}(x)-u_{n}(y)e^{\imath A(\frac{x+y}{2})\cdot (x-y)})\overline{(u_{n}u_{L,n}^{2(\beta-1)}(x)-u_{n}u_{L,n}^{2(\beta-1)}(y)e^{\imath A(\frac{x+y}{2})\cdot (x-y)})}\right] \\
&=\Re\Bigl[|u_{n}(x)|^{2}v_{L}^{2(\beta-1)}(x)-u_{n}(x)\overline{u_{n}(y)} u_{L,n}^{2(\beta-1)}(y)e^{-\imath A(\frac{x+y}{2})\cdot (x-y)}\\
&\quad-u_{n}(y)\overline{u_{n}(x)} u_{L,n}^{2(\beta-1)}(x) e^{\imath A(\frac{x+y}{2})\cdot (x-y)} +|u_{n}(y)|^{2}u_{L,n}^{2(\beta-1)}(y) \Bigr] \\
&\geq (|u_{n}(x)|^{2}u_{L,n}^{2(\beta-1)}(x)-|u_{n}(x)||u_{n}(y)|u_{L,n}^{2(\beta-1)}(y) \\
&\quad -|u_{n}(y)||u_{n}(x)|u_{L,n}^{2(\beta-1)}(x)+|u_{n}(y)|^{2}u^{2(\beta-1)}_{L,n}(y) \\
&=(|u_{n}(x)|-|u_{n}(y)|)(|u_{n}(x)|u_{L,n}^{2(\beta-1)}(x)-|u_{n}(y)|u_{L,n}^{2(\beta-1)}(y)),
\end{align*}
which implies that
\begin{align}\label{realeF}
&\Re\Bigl(\iint_{\R^{2N}} \frac{(u_{n}(x)-u_{n}(y)e^{\imath A(\frac{x+y}{2})\cdot (x-y)})}{|x-y|^{N+2s}} \times \nonumber\\
&\quad \times \overline{((u_{n}u_{L,n}^{2(\beta-1)})(x)-(u_{n}u_{L,n}^{2(\beta-1)})(y)e^{\imath A(\frac{x+y}{2})\cdot (x-y)})} dx dy\Bigr) \nonumber\\
&\geq \iint_{\R^{2N}} \frac{(|u_{n}(x)|-|u_{n}(y)|)}{|x-y|^{N+2s}} (|u_{n}(x)|u_{L,n}^{2(\beta-1)}(x)-|u_{n}(y)|u_{L,n}^{2(\beta-1)}(y))\, dx dy.
\end{align}
As in \cite{Apota}, for all $t\geq 0$, we define
\begin{equation*}
\gamma(t)=\gamma_{L, \beta}(t)=t t_{L}^{2(\beta-1)}
\end{equation*}
where  $t_{L}=\min\{t, L\}$. 
Since $\gamma$ is an increasing function we have
\begin{align*}
(a-b)(\gamma(a)- \gamma(b))\geq 0 \quad \mbox{ for any } a, b\in \R.
\end{align*}
Let 
\begin{equation*}
\Lambda(t)=\frac{|t|^{2}}{2} \quad \mbox{ and } \quad \Gamma(t)=\int_{0}^{t} (\gamma'(\tau))^{\frac{1}{2}} d\tau. 
\end{equation*}
Since
\begin{equation}\label{Gg}
\Lambda'(a-b)(\gamma(a)-\gamma(b))\geq |\Gamma(a)-\Gamma(b)|^{2} \mbox{ for any } a, b\in\R, 
\end{equation}
we get
\begin{align}\label{Gg1}
|\Gamma(|u_{n}(x)|)- \Gamma(|u_{n}(y)|)|^{2} \leq (|u_{n}(x)|- |u_{n}(y)|)((|u_{n}|u_{L,n}^{2(\beta-1)})(x)- (|u_{n}|u_{L,n}^{2(\beta-1)})(y)). 
\end{align}
Putting together \eqref{realeF} and \eqref{Gg1}, we can see that
\begin{align}\label{conto1FFF}
&\Re\Bigl(\iint_{\R^{2N}} \frac{(u_{n}(x)-u_{n}(y)e^{\imath A(\frac{x+y}{2})\cdot (x-y)})}{|x-y|^{N+2s}} \times \nonumber\\
&\quad \times \overline{(u_{n}u_{L,n}^{2(\beta-1)}(x)\!-\!u_{n}u_{L,n}^{2(\beta-1)}(y)e^{\imath A(\frac{x+y}{2})\cdot (x-y)})} dx dy\Bigr) 	\nonumber\\
&\geq [\Gamma(|u_{n}|)]^{2}.
\end{align}
Since $\Gamma(|u_{n}|)\geq \frac{1}{\beta} |u_{n}| u_{L,n}^{\beta-1}$ and using the fractional Sobolev embedding $\mathcal{D}^{s,2}(\R^{N}, \R)\subset L^{\2}(\R^{N}, \R)$ (see \cite{DPV}), we can infer that 
\begin{equation}\label{SS1}
[\Gamma(|u_{n}|)]^{2}\geq S_{*} \|\Gamma(|u_{n}|)\|^{2}_{L^{\2}(\R^{N})}\geq \left(\frac{1}{\beta}\right)^{2} S_{*}\||u_{n}| u_{L,n}^{\beta-1}\|^{2}_{L^{\2}(\R^{N})}.
\end{equation}
Then \eqref{conto1FF}, \eqref{conto1FFF} and \eqref{SS1} yield
\begin{align}\label{BMS}
&\left(\frac{1}{\beta}\right)^{2} S_{*}\||u_{n}| u_{L,n}^{\beta-1}\|^{2}_{L^{\2}(\R^{N})}+\int_{\R^{N}} V(\e_{n} x)|u_{n}|^{2}u_{L,n}^{2(\beta-1)} dx\nonumber\\
&\leq \int_{\R^{N}} \tilde{K}_{\e_{n}}(u_{n}) g(\e_{n}x, |u_{n}|^{2}) |u_{n}|^{2} u_{L,n}^{2(\beta-1)} dx.
\end{align}
By $(g_1)$ and $(g_2)$, we know that for any $\xi>0$ there exists $C_{\xi}>0$ such that
\begin{equation}\label{SS2}
g(x, t^{2})t^{2}\leq \xi |t|^{2}+C_{\xi}|t|^{\2} \mbox{ for any } (x, t)\in \R^{N}\times\R.
\end{equation}
Hence, using \eqref{BMS}, \eqref{SS2}, $u_{n}\in \B$, Lemma \ref{lemK} and choosing $\xi>0$ sufficiently small, we can see that
\begin{equation}\label{simo1}
\|w_{L,n}\|_{L^{\2}(\R^{N})}^{2}\leq C \beta^{2} \int_{\R^{N}} |u_{n}|^{q}u_{L,n}^{2(\beta-1)},
\end{equation}
for some $C$ independent of $\beta$, $L$ and $n$. Here we set $w_{L,n}:=|u_{n}| u_{L,n}^{\beta-1}$.
Arguing as in the proof of Lemma $5.1$ in \cite{Apota} we can see that
\begin{equation}\label{UBu}
\|u_{n}\|_{L^{\infty}(\R^{N})}\leq K \mbox{ for all } n\in \mathbb{N}.
\end{equation}
Moreover, by interpolation, $(|u_{n}|)$ strongly converges in $L^{r}(\R^{N}, \R)$ for all $r\in (2, \infty)$, and in view of the growth assumptions on $g$, also $g(\e_{n} x, |u_{n}|^{2})|u_{n}|$ strongly converges  in the same Lebesgue spaces. \\
In what follows, we show that $|u_{n}|$ is a weak subsolution to 
\begin{equation}\label{Kato0}
\left\{
\begin{array}{ll}
(-\Delta)^{s}v+V(\e_{n} x) v=\left(\frac{1}{|x|^{\mu}}*G(\e_{n} x, v^{2})\right)g(\e_{n} x, v^{2})v &\mbox{ in } \R^{N} \\
v\geq 0 \quad \mbox{ in } \R^{N}.
\end{array}
\right.
\end{equation}
Fix $\varphi\in C^{\infty}_{c}(\R^{N}, \R)$ such that $\varphi\geq 0$, and we take $\psi_{\delta, n}=\frac{u_{n}}{u_{\delta, n}}\varphi$ as test function in \eqref{Pe}, where $u_{\delta,n}=\sqrt{|u_{n}|^{2}+\delta^{2}}$ for $\delta>0$. We note that $\psi_{\delta, n}\in H^{s}_{\e_{n}}$ for all $\delta>0$ and $n\in \mathbb{N}$. 
Indeed, it is clear that
$$
\int_{\R^{N}} V(\e_{n} x) |\psi_{\delta,n}|^{2} dx\leq \int_{\supp(\varphi)} V(\e_{n} x)\varphi^{2} dx<\infty.
$$  
Now, we show that $[\psi_{\delta, n}]_{A_{\e}}$ is finite. Let us observe that
\begin{align*}
\psi_{\delta,n}(x)-\psi_{\delta,n}(y)e^{\imath A_{\e}(\frac{x+y}{2})\cdot (x-y)}&=\Bigl(\frac{u_{n}(x)}{u_{\delta,n}(x)}\Bigr)\varphi(x)-\Bigl(\frac{u_{n}(y)}{u_{\delta,n}(y)}\Bigr)\varphi(y)e^{\imath A_{\e}(\frac{x+y}{2})\cdot (x-y)}\\
&=\Bigl[\Bigl(\frac{u_{n}(x)}{u_{\delta,n}(x)}\Bigr)-\Bigl(\frac{u_{n}(y)}{u_{\delta,n}(x)}\Bigr)e^{\imath A_{\e}(\frac{x+y}{2})\cdot (x-y)}\Bigr]\varphi(x) \\
&+\Bigl[\varphi(x)-\varphi(y)\Bigr] \Bigl(\frac{u_{n}(y)}{u_{\delta,n}(x)}\Bigr) e^{\imath A_{\e}(\frac{x+y}{2})\cdot (x-y)} \\
&+\Bigl(\frac{u_{n}(y)}{u_{\delta,n}(x)}-\frac{u_{n}(y)}{u_{\delta,n}(y)}\Bigr)\varphi(y) e^{\imath A_{\e}(\frac{x+y}{2})\cdot (x-y)}.
\end{align*}
Then, by using $|z+w+k|^{2}\leq 4(|z|^{2}+|w|^{2}+|k|^{2})$ for all $z,w,k\in \C$, $|e^{\imath t}|=1$ for all $t\in \R$, $u_{\delta,n}\geq \delta$, $|\frac{u_{n}}{u_{\delta,n}}|\leq 1$, \eqref{UBu} and $|\sqrt{|z|^{2}+\delta^{2}}-\sqrt{|w|^{2}+\delta^{2}}|\leq ||z|-|w||$ for all $z, w\in \C$, we obtain that
\begin{align*}
&|\psi_{\delta,n}(x)-\psi_{\delta,n}(y)e^{\imath A_{\e}(\frac{x+y}{2})\cdot (x-y)}|^{2} \\
&\leq \frac{4}{\delta^{2}}|u_{n}(x)-u_{n}(y)e^{\imath A_{\e}(\frac{x+y}{2})\cdot (x-y)}|^{2}\|\varphi\|^{2}_{L^{\infty}(\R^{N})} +\frac{4}{\delta^{2}}|\varphi(x)-\varphi(y)|^{2} \|u_{n}\|^{2}_{L^{\infty}(\R^{N})} \\
&+\frac{4}{\delta^{4}} \|u_{n}\|^{2}_{L^{\infty}(\R^{N})} \|\varphi\|^{2}_{L^{\infty}(\R^{N})} |u_{\delta,n}(y)-u_{\delta,n}(x)|^{2} \\
&\leq \frac{4}{\delta^{2}}|u_{n}(x)-u_{n}(y)e^{\imath A_{\e}(\frac{x+y}{2})\cdot (x-y)}|^{2}\|\varphi\|^{2}_{L^{\infty}(\R^{N})} +\frac{4K^{2}}{\delta^{2}}|\varphi(x)-\varphi(y)|^{2} \\
&+\frac{4K^{2}}{\delta^{4}} \|\varphi\|^{2}_{L^{\infty}(\R^{N})} ||u_{n}(y)|-|u_{n}(x)||^{2}. 
\end{align*}
Since $u_{n}\in H^{s}_{\e_{n}}$, $|u_{n}|\in H^{s}(\R^{N}, \R)$ (by Lemma \ref{DI}) and $\varphi\in C^{\infty}_{c}(\R^{N}, \R)$, we conclude that $\psi_{\delta,n}\in H^{s}_{\e_{n}}$.
Then we get
\begin{align}\label{Kato1}
&\Re\Bigl[\iint_{\R^{2N}} \frac{(u_{n}(x)-u_{n}(y)e^{\imath A_{\e}(\frac{x+y}{2})\cdot (x-y)})}{|x-y|^{N+2s}} \times \nonumber \\
&\quad \times \Bigl(\frac{\overline{u_{n}(x)}}{u_{\delta,n}(x)}\varphi(x)\!-\!\frac{\overline{u_{n}(y)}}{u_{\delta,n}(y)}\varphi(y)e^{-\imath A_{\e}(\frac{x+y}{2})\cdot (x-y)}  \Bigr) dx dy\Bigr] 
+\int_{\R^{N}} V(\e_{n} x)\frac{|u_{n}|^{2}}{u_{\delta,n}}\varphi dx\nonumber\\
&=\int_{\R^{N}} \Bigl(\frac{1}{|x|^{\mu}}*G(\e_{n} x, |u_{n}|^{2})\Bigr) g(\e_{n} x, |u_{n}|^{2})\frac{|u_{n}|^{2}}{u_{\delta,n}}\varphi dx.
\end{align}
Now, we aim to pass to the limit as $\delta\rightarrow 0$ in \eqref{Kato1} to deduce that \eqref{Kato0} holds true.
Since $\Re(z)\leq |z|$ for all $z\in \C$ and  $|e^{\imath t}|=1$ for all $t\in \R$, we have
\begin{align}\label{alves1}
&\Re\Bigl[(u_{n}(x)-u_{n}(y)e^{\imath A_{\e}(\frac{x+y}{2})\cdot (x-y)}) \Bigl(\frac{\overline{u_{n}(x)}}{u_{\delta,n}(x)}\varphi(x)-\frac{\overline{u_{n}(y)}}{u_{\delta,n}(y)}\varphi(y)e^{-\imath A_{\e}(\frac{x+y}{2})\cdot (x-y)}  \Bigr)\Bigr] \nonumber\\
&=\Re\Bigl[\frac{|u_{n}(x)|^{2}}{u_{\delta,n}(x)}\varphi(x)+\frac{|u_{n}(y)|^{2}}{u_{\delta,n}(y)}\varphi(y)-\frac{u_{n}(x)\overline{u_{n}(y)}}{u_{\delta,n}(y)}\varphi(y)e^{-\imath A_{\e}(\frac{x+y}{2})\cdot (x-y)} \nonumber \\
&-\frac{u_{n}(y)\overline{u_{n}(x)}}{u_{\delta,n}(x)}\varphi(x)e^{\imath A_{\e}(\frac{x+y}{2})\cdot (x-y)}\Bigr] \nonumber \\
&\geq \Bigl[\frac{|u_{n}(x)|^{2}}{u_{\delta,n}(x)}\varphi(x)+\frac{|u_{n}(y)|^{2}}{u_{\delta,n}(y)}\varphi(y)-|u_{n}(x)|\frac{|u_{n}(y)|}{u_{\delta,n}(y)}\varphi(y)-|u_{n}(y)|\frac{|u_{n}(x)|}{u_{\delta,n}(x)}\varphi(x) \Bigr].
\end{align}
Let us note that
\begin{align}\label{alves2}
&\frac{|u_{n}(x)|^{2}}{u_{\delta,n}(x)}\varphi(x)+\frac{|u_{n}(y)|^{2}}{u_{\delta,n}(y)}\varphi(y)-|u_{n}(x)|\frac{|u_{n}(y)|}{u_{\delta,n}(y)}\varphi(y)-|u_{n}(y)|\frac{|u_{n}(x)|}{u_{\delta,n}(x)}\varphi(x) \nonumber\\
&=  \frac{|u_{n}(x)|}{u_{\delta,n}(x)}(|u_{n}(x)|-|u_{n}(y)|)\varphi(x)-\frac{|u_{n}(y)|}{u_{\delta,n}(y)}(|u_{n}(x)|-|u_{n}(y)|)\varphi(y) \nonumber\\
&=\Bigl[\frac{|u_{n}(x)|}{u_{\delta,n}(x)}(|u_{n}(x)|-|u_{n}(y)|)\varphi(x)-\frac{|u_{n}(x)|}{u_{\delta,n}(x)}(|u_{n}(x)|-|u_{n}(y)|)\varphi(y)\Bigr] \nonumber\\
&\quad +\Bigl(\frac{|u_{n}(x)|}{u_{\delta,n}(x)}-\frac{|u_{n}(y)|}{u_{\delta,n}(y)} \Bigr) (|u_{n}(x)|-|u_{n}(y)|)\varphi(y) \nonumber\\
&=\frac{|u_{n}(x)|}{u_{\delta,n}(x)}(|u_{n}(x)|-|u_{n}(y)|)(\varphi(x)-\varphi(y)) \nonumber \\
&\quad +\Bigl(\frac{|u_{n}(x)|}{u_{\delta,n}(x)}-\frac{|u_{n}(y)|}{u_{\delta,n}(y)} \Bigr) (|u_{n}(x)|-|u_{n}(y)|)\varphi(y) \nonumber\\
&\geq \frac{|u_{n}(x)|}{u_{\delta,n}(x)}(|u_{n}(x)|-|u_{n}(y)|)(\varphi(x)-\varphi(y)) 
\end{align}
where in the last inequality we used the fact that
$$
\left(\frac{|u_{n}(x)|}{u_{\delta,n}(x)}-\frac{|u_{n}(y)|}{u_{\delta,n}(y)} \right) (|u_{n}(x)|-|u_{n}(y)|)\varphi(y)\geq 0
$$
because
$$
h(t)=\frac{t}{\sqrt{t^{2}+\delta^{2}}} \mbox{ is increasing for } t\geq 0 \quad \mbox{ and } \quad \varphi\geq 0 \mbox{ in }\R^{N}.
$$
Since
\begin{align*}
&\frac{|\frac{|u_{n}(x)|}{u_{\delta,n}(x)}(|u_{n}(x)|-|u_{n}(y)|)(\varphi(x)-\varphi(y))|}{|x-y|^{N+2s}}\\
&\quad \leq \frac{||u_{n}(x)|-|u_{n}(y)||}{|x-y|^{\frac{N+2s}{2}}} \frac{|\varphi(x)-\varphi(y)|}{|x-y|^{\frac{N+2s}{2}}}\in L^{1}(\R^{2N}),
\end{align*}
and $\frac{|u_{n}(x)|}{u_{\delta,n}(x)}\rightarrow 1$ a.e. in $\R^{N}$ as $\delta\rightarrow 0$,
we can use \eqref{alves1}, \eqref{alves2} and the Dominated Convergence Theorem to deduce that
\begin{align}\label{Kato2}
&\limsup_{\delta\rightarrow 0} \Re\Bigl[\iint_{\R^{2N}} \frac{(u_{n}(x)-u_{n}(y)e^{\imath A_{\e}(\frac{x+y}{2})\cdot (x-y)})}{|x-y|^{N+2s}} \times \nonumber \\
&\quad \times \Bigl(\frac{\overline{u_{n}(x)}}{u_{\delta,n}(x)}\varphi(x)-\frac{\overline{u_{n}(y)}}{u_{\delta,n}(y)}\varphi(y)e^{-\imath A_{\e}(\frac{x+y}{2})\cdot (x-y)}  \Bigr) dx dy\Bigr] \nonumber\\
&\geq \limsup_{\delta\rightarrow 0} \iint_{\R^{2N}} \frac{|u_{n}(x)|}{u_{\delta,n}(x)}(|u_{n}(x)|-|u_{n}(y)|)(\varphi(x)-\varphi(y)) \frac{dx dy}{|x-y|^{N+2s}} \nonumber\\
&=\iint_{\R^{2N}} \frac{(|u_{n}(x)|-|u_{n}(y)|)(\varphi(x)-\varphi(y))}{|x-y|^{N+2s}} dx dy.
\end{align}
On the other hand, from the Dominated Convergence Theorem (we note that $\frac{|u_{n}|^{2}}{u_{\delta, n}}\leq |u_{n}|$, $\varphi\in C^{\infty}_{c}(\R^{N}, \R)$ and $\tilde{K}_{\e}(u_{n})$ is bounded in view of Lemma \ref{lemK}) we can infer that
\begin{equation}\label{Kato3}
\lim_{\delta\rightarrow 0} \int_{\R^{N}} V(\e_{n} x)\frac{|u_{n}|^{2}}{u_{\delta,n}}\varphi dx=\int_{\R^{N}} V(\e_{n} x)|u_{n}|\varphi dx
\end{equation}
and
\begin{align}\label{Kato4}
&\lim_{\delta\rightarrow 0}  \int_{\R^{N}}  \left(\frac{1}{|x|^{\mu}}*G(\e_{n} x, |u_{n}|^{2})\right) g(\e_{n} x, |u_{n}|^{2})\frac{|u_{n}|^{2}}{u_{\delta,n}}\varphi dx\nonumber \\
&=\int_{\R^{N}}  \left(\frac{1}{|x|^{\mu}}*G(\e_{n} x, |u_{n}|^{2})\right) g(\e_{n} x, |u_{n}|^{2}) |u_{n}|\varphi dx.
\end{align}
Taking into account \eqref{Kato1}, \eqref{Kato2}, \eqref{Kato3} and \eqref{Kato4} we can see that
\begin{align*}
&\iint_{\R^{2N}} \frac{(|u_{n}(x)|-|u_{n}(y)|)(\varphi(x)-\varphi(y))}{|x-y|^{N+2s}} dx dy+\int_{\R^{N}} V(\e_{n} x)|u_{n}|\varphi dx\\
&\leq \int_{\R^{N}}  \left(\frac{1}{|x|^{\mu}}*G(\e_{n} x, |u_{n}|^{2})\right) g(\e_{n} x, |u_{n}|^{2}) |u_{n}|\varphi dx
\end{align*}
for any $\varphi\in C^{\infty}_{c}(\R^{N}, \R)$ such that $\varphi\geq 0$. Then $|u_{n}|$ is a weak subsolution to \eqref{Kato0}.
By using $(V_{1})$, $u_{n}\in \B$ for all $n$ big enough, and Lemma \ref{lemK}, it is clear that $v_{n}=|u_{n}|(\cdot+\tilde{y}_{n})$ solves 
\begin{equation}\label{Pkat}
(-\Delta)^{s} v_{n} + V_{0}v_{n}\leq C_{0} g(\e_{n} x+\e_{n}\tilde{y}_{n}, v_{n}^{2})v_{n} \mbox{ in } \R^{N}. 
\end{equation}
Let us denote by $z_{n}\in H^{s}(\R^{N}, \R)$ the unique solution to
\begin{equation}\label{US}
(-\Delta)^{s} z_{n} + V_{0}z_{n}= C_{0}g_{n} \mbox{ in } \R^{N},
\end{equation}
where
$$
g_{n}:=g(\e_{n} x+\e_{n}\tilde{y}_{n}, v_{n}^{2})v_{n}\in L^{r}(\R^{N}, \R) \quad \forall r\in [2, \infty].
$$
Since \eqref{UBu} yields $\|v_{n}\|_{L^{\infty}(\R^{N})}\leq C$ for all $n\in \mathbb{N}$, by interpolation we know that $v_{n}\rightarrow v$ strongly converges in $L^{r}(\R^{N}, \R)$ for all $r\in (2, \infty)$, for some $v\in L^{r}(\R^{N}, \R)$, and by the growth assumptions on $f$, we can see that also $g_{n}\rightarrow  f(v^{2})v$ in $L^{r}(\R^{N}, \R)$ and $\|g_{n}\|_{L^{\infty}(\R^{N})}\leq C$ for all $n\in \mathbb{N}$.
Since $z_{n}=\mathcal{K}*(C_{0}g_{n})$, where $\mathcal{K}$ is the Bessel kernel (see \cite{FQT}), we can argue as in \cite{AM} to infer that $|z_{n}(x)|\rightarrow 0$ as $|x|\rightarrow \infty$ uniformly with respect to $n\in \mathbb{N}$.
On the other hand, $v_{n}$ satisfies \eqref{Pkat} and $z_{n}$ solves \eqref{US} so a simple comparison argument shows that $0\leq v_{n}\leq z_{n}$ a.e. in $\R^{N}$ and for all $n\in \mathbb{N}$. This means that $v_{n}(x)\rightarrow 0$ as $|x|\rightarrow \infty$ uniformly in $n\in \mathbb{N}$.
\end{proof}

\noindent
At this point we have all ingredients to give the proof of Theorem \ref{thm1}.
\begin{proof}
By using Lemma \ref{prop3.3} we can find a sequence $(\tilde{y}_{n})\subset \R^{N}$ such that $\e_{n}\tilde{y}_{n}\rightarrow y_{0}$ for some $y_{0} \in \Lambda$ such that $V(y_{0})=V_{0}$. 
Then there exists $r>0$ such that, for some subsequence still denoted by itself, we have $B_{r}(\tilde{y}_{n})\subset \Lambda$ for all $n\in \mathbb{N}$.
Hence $B_{\frac{r}{\e_{n}}}(\tilde{y}_{n})\subset \Lambda_{\e_{n}}$ for all $n\in \mathbb{N}$, which gives 
$$
\R^{N}\setminus \Lambda_{\e_{n}}\subset \R^{N} \setminus B_{\frac{r}{\e_{n}}}(\tilde{y}_{n}) \mbox{ for any } n\in \mathbb{N}.
$$ 
In view of Lemma \ref{moser}, we know that there exists $R>0$ such that 
$$
v_{n}(x)<a \mbox{ for } |x|\geq R \mbox{ and } n\in \mathbb{N},
$$ 
where $v_{n}(x)=|u_{\e_{n}}|(x+ \tilde{y}_{n})$. 
Then $|u_{\e_{n}}(x)|<a$ for any $x\in \R^{N}\setminus B_{R}(\tilde{y}_{n})$ and $n\in \mathbb{N}$. Moreover, there exists $\nu \in \mathbb{N}$ such that for any $n\geq \nu$ and $r/\e_{n}>R$ it holds 
$$
\R^{N}\setminus \Lambda_{\e_{n}}\subset \R^{N} \setminus B_{\frac{r}{\e_{n}}}(\tilde{y}_{n})\subset \R^{N}\setminus B_{R}(\tilde{y}_{n}),
$$ 
which gives $|u_{\e_{n}}(x)|<a$ for any $x\in \R^{N}\setminus \Lambda_{\e_{n}}$ and $n\geq \nu$.  \\
Therefore, there exists $\e_{0}>0$ such that problem \eqref{R} admits a nontrivial solution $u_{\e}$ for all $\e\in (0, \e_{0})$. Then $\hat{u}_{\e}(x)=u_{\e}(x/\e)$ is a solution to (\ref{P}). 
Finally, we study the behavior of the maximum points of  $|u_{\e_{n}}|$. In view of $(g_1)$, there exists $\gamma\in (0,a)$ such that
\begin{align}\label{4.18HZ}
g(\e x, t^{2})t^{2}\leq \frac{V_{0}}{\ell_{0}}t^{2}, \mbox{ for all } x\in \R^{N}, |t|\leq \gamma.
\end{align}
Using a similar discussion above, we can take $R>0$ such that
\begin{align}\label{4.19HZ}
\|u_{\e_{n}}\|_{L^{\infty}(B^{c}_{R}(\tilde{y}_{n}))}<\gamma.
\end{align}
Up to a subsequence, we may also assume that
\begin{align}\label{4.20HZ}
\|u_{\e_{n}}\|_{L^{\infty}(B_{R}(\tilde{y}_{n}))}\geq \gamma.
\end{align}
Otherwise, if \eqref{4.20HZ} does not hold, then $\|u_{\e_{n}}\|_{L^{\infty}(\R^{N})}< \gamma$ and by using $J_{\e_{n}}'(u_{\e_{n}})=0$, \eqref{4.18HZ}, Lemma \ref{DI} and 
$$
\left\|\frac{1}{|x|^{\mu}}*G(\e x, |u_{n}|^{2})\right\|_{L^{\infty}(\R^{N})}<C_{0},
$$
we have
\begin{align*}
[|u_{\e_{n}}|]^{2}+\int_{\R^{N}}V_{0}|u_{\e_{n}}|^{2}dx\leq \|u_{\e_{n}}\|^{2}_{\e_{n}}&=\int_{\R^{N}} \tilde{K}_{\e}(u_{\e_{n}}) g_{\e_{n}}(x, |u_{\e_{n}}|^{2})|u_{\e_{n}}|^{2}\,dx\\
&\leq \frac{C_{0}V_{0}}{\ell_{0}}\int_{\R^{N}}|u_{\e_{n}}|^{2}\, dx
\end{align*}
and being $\frac{C_{0}}{\ell_{0}}<\frac{1}{2}$ we deduce that $\||u_{\e_{n}}|\|_{H^{s}(\R^{N})}=0$ which is impossible. 
From \eqref{4.19HZ} and \eqref{4.20HZ}, it follows  that the maximum points $p_{n}$ of $|u_{\e_{n}}|$ belong to $B_{R}(\tilde{y}_{n})$, that is $p_{n}=\tilde{y}_{n}+q_{n}$ for some $q_{n}\in B_{R}$.  Since $\hat{u}_{n}(x)=u_{\e_{n}}(x/\e_{n})$ is a solution to \eqref{P}, we can see that the maximum point $\eta_{\e_{n}}$ of $|\hat{u}_{n}|$ is given by $\eta_{\e_{n}}=\e_{n}\tilde{y}_{n}+\e_{n}q_{n}$. Taking into account $q_{n}\in B_{R}$, $\e_{n}\tilde{y}_{n}\rightarrow y_{0}$ and $V(y_{0})=V_{0}$ and  the continuity of $V$, we can infer that
$$
\lim_{n\rightarrow \infty} V(\eta_{\e_{n}})=V_{0}.
$$
Finally, we give a  decay estimate for $|\hat{u}_{n}|$. We follow some arguments used  in \cite{A6}.\\
Invoking Lemma $4.3$ in \cite{FQT}, we can find a function $w$ such that 
\begin{align}\label{HZ1}
0<w(x)\leq \frac{C}{1+|x|^{N+2s}},
\end{align}
and
\begin{align}\label{HZ2}
(-\Delta)^{s} w+\frac{V_{0}}{2}w\geq 0 \mbox{ in } \R^{N}\setminus B_{R_{1}} 
\end{align}
for some suitable $R_{1}>0$. Using Lemma \ref{moser}, we know that $v_{n}(x)\rightarrow 0$ as $|x|\rightarrow \infty$ uniformly in $n\in \mathbb{N}$, so there exists $R_{2}>0$ such that
\begin{equation}\label{hzero}
h_{n}=C_{0}g(\e_{n}x+\e_{n}\tilde{y}_{n}, v_{n}^{2})v_{n}\leq \frac{C_{0}V_{0}}{\ell_{0}}v_{n}\leq \frac{V_{0}}{2}v_{n}  \mbox{ in } B_{R_{2}}^{c}.
\end{equation}
Let us denote by $w_{n}$ the unique solution to 
$$
(-\Delta)^{s}w_{n}+V_{0}w_{n}=h_{n} \mbox{ in } \R^{N}.
$$
Then $w_{n}(x)\rightarrow 0$ as $|x|\rightarrow \infty$ uniformly in $n\in \mathbb{N}$, and by comparison $0\leq v_{n}\leq w_{n}$ in $\R^{N}$. Moreover, in view of \eqref{hzero}, it holds
\begin{align*}
(-\Delta)^{s}w_{n}+\frac{V_{0}}{2}w_{n}=h_{n}-\frac{V_{0}}{2}w_{n}\leq 0 \mbox{ in } B_{R_{2}}^{c}.
\end{align*}
Take $R_{3}=\max\{R_{1}, R_{2}\}$ and we define
\begin{align}\label{HZ4}
a=\inf_{B_{R_{3}}} w>0 \mbox{ and } \tilde{w}_{n}=(b+1)w-a w_{n}.
\end{align}
where $b=\sup_{n\in \mathbb{N}} \|w_{n}\|_{L^{\infty}(\R^{N})}<\infty$. 
We aim to prove that 
\begin{equation}\label{HZ5}
\tilde{w}_{n}\geq 0 \mbox{ in } \R^{N}.
\end{equation}
Let us note that
\begin{align}
&\lim_{|x|\rightarrow \infty} \tilde{w}_{n}(x)=0 \mbox{ uniformly in } n\in \mathbb{N},  \label{HZ0N} \\
&\tilde{w}_{n}\geq ba+w-ba>0 \mbox{ in } B_{R_{3}} \label{HZ0},\\
&(-\Delta)^{s} \tilde{w}_{n}+\frac{V_{0}}{2}\tilde{w}_{n}\geq 0 \mbox{ in } \R^{N}\setminus B_{R_{3}} \label{HZ00}.
\end{align}
We argue by contradiction, and we assume that there exists a sequence $(\bar{x}_{j, n})\subset \R^{N}$ such that 
\begin{align}\label{HZ6}
\inf_{x\in \R^{N}} \tilde{w}_{n}(x)=\lim_{j\rightarrow \infty} \tilde{w}_{n}(\bar{x}_{j, n})<0. 
\end{align}
From (\ref{HZ0N}) it follows that $(\bar{x}_{j, n})$ is bounded, and, up to subsequence, we may assume that there exists $\bar{x}_{n}\in \R^{N}$ such that $\bar{x}_{j, n}\rightarrow \bar{x}_{n}$ as $j\rightarrow \infty$. 
In view of (\ref{HZ6}) we can see that
\begin{align}\label{HZ7}
\inf_{x\in \R^{N}} \tilde{w}_{n}(x)= \tilde{w}_{n}(\bar{x}_{n})<0.
\end{align}
By using the minimality of $\bar{x}_{n}$ and the integral representation formula for the fractional Laplacian \cite{DPV}, we can see that 
\begin{align}\label{HZ8}
(-\Delta)^{s}\tilde{w}_{n}(\bar{x}_{n})=\frac{C(N, s)}{2} \int_{\R^{N}} \frac{2\tilde{w}_{n}(\bar{x}_{n})-\tilde{w}_{n}(\bar{x}_{n}+\xi)-\tilde{w}_{n}(\bar{x}_{n}-\xi)}{|\xi|^{N+2s}} d\xi\leq 0.
\end{align}
Putting together (\ref{HZ0}) and (\ref{HZ6}), we have $\bar{x}_{n}\in \R^{N}\setminus B_{R_{3}}$.
This fact combined with (\ref{HZ7}) and (\ref{HZ8}) yields 
$$
(-\Delta)^{s} \tilde{w}_{n}(\bar{x}_{n})+\frac{V_{0}}{2}\tilde{w}_{n}(\bar{x}_{n})<0,
$$
which gives a contradiction in view of (\ref{HZ00}).
As a consequence (\ref{HZ5}) holds true, and by using (\ref{HZ1}) and $v_{n}\leq w_{n}$ we can deduce that
\begin{align*}
0\leq v_{n}(x)\leq w_{n}(x)\leq \frac{(b+1)}{a}w(x)\leq \frac{\tilde{C}}{1+|x|^{N+2s}} \mbox{ for all } n\in \mathbb{N}, x\in \R^{N},
\end{align*}
for some constant $\tilde{C}>0$. Recalling the definition of $v_{n}$, we can obtain that  
\begin{align*}
|\hat{u}_{n}(x)|&=\left|u_{\e_{n}}\left(\frac{x}{\e_{n}}\right)\right|=v_{n}\left(\frac{x}{\e_{n}}-\tilde{y}_{n}\right) \\
&\leq \frac{\tilde{C}}{1+|\frac{x}{\e_{n}}-\tilde{y}_{\e_{n}}|^{N+2s}} \\
&=\frac{\tilde{C} \e_{n}^{N+2s}}{\e_{n}^{N+2s}+|x- \e_{n} \tilde{y}_{\e_{n}}|^{N+2s}} \\
&\leq \frac{\tilde{C} \e_{n}^{N+2s}}{\e_{n}^{N+2s}+|x-\eta_{\e_{n}}|^{N+2s}}.
\end{align*}

\end{proof}

\noindent {\bf Acknowledgements.} 
The author thanks Claudianor O. Alves and Hoai-Minh Nguyen for delightful and pleasant discussions about the results of this work.

\end{document}